\documentclass[reqno]{amsart}
\usepackage{amsmath,mathtools}
\usepackage{amsmath}
\usepackage{color}
%\usepackage{lineno}
%\linenumbers

\mathtoolsset{showonlyrefs}

\def\R{{\mathbb R}}

\newcommand{\C}{\mathbb C}

\newcommand{\Z}{\mathbb Z}
\newcommand{\T}{\mathbb T}
\newcommand{\e}{\mathrm e}

\newcommand{\rd}{{\rm d}}
\newcommand\I{\mathrm{i}}
\newcommand\jc[1]{\textcolor{black}{#1}}
\DeclareMathOperator{\Hphi}{Hess\,\phi}
\DeclareMathOperator{\rank}{rank}

\DeclareMathOperator{\diag}{diag}

\newtheorem{lemma}{Lemma}
\newtheorem{theorem}[lemma]{Theorem}
\newtheorem{corollary}[lemma]{Corollary}
\newtheorem{proposition}[lemma]{Proposition}
\theoremstyle{definition}
\newtheorem{definition}{Definition}
\theoremstyle{remark}
\newtheorem{remark}{Remark}
\usepackage{hyperref}

\begin{document}

\title[Sharp time decay estimates for the discrete Klein-Gordon equation]%
{Sharp time decay estimates for the discrete Klein-Gordon equation}
\keywords{Oscillatory integrals, dispersive and Strichartz estimates, global well-posedness, spectral theory}
\subjclass[2010]{42B20, 35R02, 81Q05, 39A12, 35L05}
 \author[J.-C.\ Cuenin]{Jean-Claude Cuenin}
 \address[J.-C.\ Cuenin]{Department of Mathematical Sciences, Loughborough University, Loughborough,
 Leicestershire, LE11 3TU United Kingdom}
 \email{J.Cuenin@lboro.ac.uk}
 \author[I. A. Ikromov]{Isroil A.\ Ikromov}
\address[I. A. Ikromov]{Institute of Mathematics named after V.I. Romanovsky Academy of Sciences of Uzbekistan,
University Boulevard 15, 140104, Samarkand, Uzbekistan
}
\email{ikromov1@rambler.ru}
 
\date{July 21, 2021}

\begin{abstract}
We establish sharp time decay estimates for the the Klein-Gordon equation on the cubic lattice in dimensions $d=2,3,4$. The $\ell^1\to\ell^{\infty}$ dispersive decay rate is $|t|^{-3/4}$ for $d=2$, $|t|^{-7/6}$ for $d=3$ and $|t|^{-3/2}\log|t|$ for $d=4$. These decay rates are faster than conjectured by Kevrekidis and Stefanov (2005). The proof relies on oscillatory integral estimates and proceeds by a detailed analysis of the the singularities of the associated phase function. We also prove new Strichartz estimates and discuss applications to nonlinear PDEs and spectral theory. 
\end{abstract}

\maketitle

\section{Introduction and main results}

Dispersive estimates play a crucial role in the study of evolution equations. Proving such estimates often boils down to establishing decay estimates for the $\ell^{\infty}$ norm of the solution at time $t$ in terms of the $\ell^1$ norm of its initial data. It is by now well-established that the $\ell^1\to\ell^{\infty}$ decay estimates give rise to a whole family of mixed space-time norm estimates, called Strichartz estimates \cite{MR0512086,MR1151250,MR1646048}. For the continuous Klein-Gordon  equation such estimates have been established e.g.\ by Brenner \cite{MR744828}, Pecher \cite{MR795519} and Ginibre and Velo \cite{MR533218,MR533219}; let us also mention the textbook exposition by Nakanishi--Schlag \cite{MR2847755}. The dispersive estimates are halfway between those for the Schrödinger equation (for low frequencies) and the wave equation (for high frequencies), see \cite[2.5]{MR2847755}. 
In the discrete case the frequencies are bounded, and one might expect the same decay rate for the discrete Klein-Gordon equation (DKG) as for the discrete Schrödinger equation (DS). In fact, this was conjectured by Kevrekidis and Stefanov \cite{MR2150357}, who proved the sharp $|t|^{-d/3}$ decay rate for the DS in any dimension $d\geq 1$ and for the DKG in one dimension. The obstruction to proving similar estimates for DKG in higher dimensions was that, contrary to the DS, the fundamental solution does not separate variables. Apparently unbeknownst to the authors, earlier, Schultz \cite{MR1611132} had already made the striking observation that the decay rate for the discrete wave equation (to which the DKG reduces in the zero mass limit) in dimensions $d=2,3$ is $|t|^{-3/4}$ and $|t|^{-7/6}$, respectively, better than the conjectured estimates. The same decay rate for the $d=2$ DKG was established by Borovyk and Goldberg \cite{MR3650321}, who also proved that  the fundamental solution decays exponentially outside the light cone and that the decay rate is independent of additional parameters (mass and wave speeds in the coordinate directions).

\subsection{Discrete Klein--Gordon equation (DKG)}
Consider the Cauchy problem for the DKG:
\begin{equation}\label{Cauchy problem DKG}
\begin{cases}
&u_{tt}(t,x)-\Delta_x u(t,x)+u(t,x)=F(t,x),\quad (t,x)\in\R\times\Z^d,\\
&u(0,x)=f(x),\quad u_t(0,x)=g(x),
\end{cases}
\end{equation} 
where $\Delta_x$ is the discrete Laplacian,
\begin{align*}
\Delta_x u(t,x)=\sum_{|x-y|=1}u(y,t)-2du(t,x).
\end{align*} 
For the plane waves $e_{\xi}(x)=\e^{\I x\cdot\xi}$ we have $(1-\Delta_x)e_{\xi}(x)=\omega(\xi)^{\jc{2}}e_{\xi}(x)$, where $\omega:\T^d\to\R$ is the dispersion relation, given by
\jc{
\begin{align}\label{dispersion relation DKG}
\omega(\xi):=\sqrt{1+\sum_{j=1}^d 2(1-\cos(\xi_j))}.
\end{align}
}
We will identify the torus $\T^d=(\R/2 \pi Z)^d$ with the fundamental domain $[0,2\pi]^d$.
The solution to \eqref{Cauchy problem DKG} is given (for sufficiently regular data $f,g,F$) by Duhamel's formula,
\begin{equation}
\begin{split}\label{Duhamel formula}
u(t,x)&=\cos(t\sqrt{\mathbf{1}-\Delta_x})f(x)+\frac{\sin(t\sqrt{\mathbf{1}-\Delta_x})}{\sqrt{\mathbf{1}-\Delta_x}}g(x)\\
&+\int_0^t \frac{\sin((t-s)\sqrt{\mathbf{1}-\Delta_x})}{\sqrt{\mathbf{1}-\Delta_x}}F(s,x)\rd s.
\end{split}
\end{equation}
Here, $\cos(t\sqrt{\mathbf{1}-\Delta_x})e_{\xi}=\cos(t\omega(\xi))e_{\xi}$, $\frac{\sin(t\sqrt{\mathbf{1}-\Delta_x})}{\sqrt{\mathbf{1}-\Delta_x}}e_{\xi}=\frac{\sin(t\omega(\xi))}{\omega(\xi)}e_{\xi}$, and the action on Schwartz functions is defined by using the Fourier inversion formula. 

\subsection{Decay estimates}
The solution $u(t,x)$ is thus a sum of oscillatory integrals of the form
\begin{align}\label{oscillatory integrals KG}
I(t,x):=\int_{[0,2\pi]^d}\e^{\I (x\cdot \xi-t\omega(\xi))}a(\xi)\rd\xi,
\end{align}
where $a:[0,2\pi]^d\to \C$ is a smooth function; in fact, $a(\xi)=1$ or $a(\xi)=\omega(\xi)^{-1}$. We will be interested in obtaining time decay estimates on $I(t,x)$, uniformly in $x\in\Z^d$. In other words, we want to find (the largest possible) $\sigma>0$ such that 
\begin{align}\label{decay sigma}
\|I(t,\cdot)\|_{\ell^{\infty}(\Z^d)}\leq C(1+|t|)^{-\sigma},\quad t\in \R,
\end{align}
for some constant $C$ independent of $t$. The following theorem is our main result. 

\begin{theorem}\label{theorem decay KG}
For the oscillatory integrals \eqref{oscillatory integrals KG} the following estimates hold for all $t\in\R$:
\begin{align}
\|I(t,\cdot)\|_{\ell^{\infty}(\Z^2)}&\leq C(1+|t|)^{-\frac{3}{4}},\label{decay d=2}\\
\|I(t,\cdot)\|_{\ell^{\infty}(\Z^3)}&\leq C(1+|t|)^{-\frac{7}{6}},\label{decay d=3}\\
\|I(t,\cdot)\|_{\ell^{\infty}(\Z^4)}&\leq C(1+|t|)^{-\frac{3}{2}}\log(2+|t|),\label{decay d=4}
\end{align}
where $C$ is a constant independent of $t$. 
\end{theorem}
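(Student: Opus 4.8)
The plan is to set $v=x/t$ and estimate, uniformly in $v\in\R^d$, the oscillatory integral $I(t,x)=\int_{[0,2\pi]^d}\e^{\I t\Phi_v(\xi)}a(\xi)\dd\xi$ with phase $\Phi_v(\xi)=v\cdot\xi-\omega(\xi)$. Since $\omega\geq 1$, both admissible amplitudes $a\equiv 1$ and $a=\omega^{-1}$ are smooth on $\T^d$, so the rate is independent of $a$, and expressing $\cos(t\omega)$, $\omega^{-1}\sin(t\omega)$ through $\e^{\pm\I t\omega}$ reduces \eqref{Duhamel formula} to \eqref{oscillatory integrals KG}. First one disposes of the non-stationary regime: $\nabla\Phi_v(\xi)=v-\nabla\omega(\xi)$ and $\nabla\omega=\omega^{-1}(\sin\xi_1,\dots,\sin\xi_d)$, so $|\nabla\omega|\leq\sqrt d$, and for $v$ outside a fixed neighbourhood of the compact set $\nabla\omega(\T^d)$ one has $|\nabla\Phi_v|\geq c>0$ and repeated integration by parts gives decay faster than any power of $|t|$. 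For $v$ in that neighbourhood the critical set $\{\xi:\nabla\omega(\xi)=v\}$ lies in a fixed compact subset of $\T^d$, and a smooth partition of unity reduces the problem to a \emph{local} bound: for a small cutoff $\chi$ about a point $\xi_0$ one must estimate $\int\chi(\xi)\e^{\I t\Phi_v(\xi)}\dd\xi$ by the claimed rate, uniformly in $v$ (equivalently, uniformly as $\xi_0$ ranges over $\T^d$ with $v=\nabla\omega(\xi_0)$), the part supported away from $\{\nabla\omega=v\}$ being again negligible by integration by parts.

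Since adding the affine term $v\cdot\xi$ only translates the critical point, the relevant invariant is the singularity type of the germ of $\omega$ at $\xi_0$ modulo affine functions (a Lagrangian/caustic singularity), and by the splitting lemma this is governed by the corank of $\mathrm{Hess}\,\omega$ and its higher degeneracy along the caustic $\mathcal S=\{\det\mathrm{Hess}\,\omega=0\}$. The structure of $\omega$ is helpful here: $\omega^2=1+2d-2\sum_j\cos\xi_j$ is affine in the $\cos\xi_j$, the matrix $\mathrm{Hess}\,\omega=\omega^{-1}\diag(\cos\xi_1,\dots,\cos\xi_d)-\omega^{-3}(\sin\xi_i\sin\xi_j)_{i,j}$ is a rank-one perturbation of a diagonal matrix, so $\det\mathrm{Hess}\,\omega$ and the corank on each stratum of $\mathcal S$ are available in closed form, and $\omega$ is invariant under $\xi_j\mapsto2\pi-\xi_j$ and under permutations of the coordinates. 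Off $\mathcal S$ ordinary stationary phase gives $|t|^{-d/2}$, which already beats \eqref{decay d=2}--\eqref{decay d=3} and matches \eqref{decay d=4}; on the generic (codimension-one) part of $\mathcal S$ the singularity is generically a fold, contributing $|t|^{-(d-1)/2-1/3}$, i.e.\ $|t|^{-5/6},|t|^{-4/3},|t|^{-11/6}$ for $d=2,3,4$ --- still stronger than needed. The slowest decay comes from the most symmetric point $\xi_0=(\pi/2,\dots,\pi/2)$ and its symmetry images, where every $\cos\xi_j$ vanishes: there $\mathrm{Hess}\,\omega=-\omega^{-3}\mathbf 1\mathbf 1^{\mathrm T}$ has corank $d-1$, $\omega^2$ has no quadratic part, and the leading behaviour of $\omega$ on the $(d-1)$-dimensional kernel is a multiple of the third elementary symmetric polynomial $e_3$ in the deviations $\delta_j$ restricted to $\{\sum_j\delta_j=0\}$. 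Computing this cubic form one finds: for $d=2$ it vanishes identically and one must go to quartic order, obtaining an $A_3$ point (germ $\pm\xi_1^2\pm\xi_2^4$); for $d=3$ it equals, up to a linear change, $\xi_2\xi_3(\xi_2+\xi_3)$, a product of three distinct lines --- a $D_4$ point; for $d=4$ it equals $-(\delta_1+\delta_2)(\delta_2+\delta_3)(\delta_3+\delta_1)$, a product of three \emph{independent} linear forms, hence equivalent to $\xi_2\xi_3\xi_4$.

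It then remains to run the matching oscillatory-integral estimate for each model, \emph{uniformly in $v$} and \emph{stably} under the higher-order perturbations present in the true germ and generated as $\xi_0$ moves. For the elliptic directions one uses stationary phase; for a one-dimensional degenerate direction, van der Corput's lemma ($|\int\e^{\I t\psi}\dd\xi|\leq C|t|^{-1/k}$ when $|\psi^{(k)}|\geq c$); for the genuinely multidimensional degenerate factors, a Newton-polyhedron / resolution-of-singularities estimate (damped oscillatory integrals in the spirit of Varchenko), with the perturbation terms carried along as parameters. This yields, for $d=2$, $|t|^{-1/2}\cdot|t|^{-1/4}=|t|^{-3/4}$ from the $A_3$ point; for $d=3$, $|t|^{-1/2}\cdot|t|^{-2/3}=|t|^{-7/6}$ from the $D_4$ point, the exponent $2/3$ being read off from the Newton polygon of $\xi_2\xi_3(\xi_2+\xi_3)$; for $d=4$, $|t|^{-1/2}\cdot|t|^{-1}\log|t|=|t|^{-3/2}\log|t|$ from the $\xi_2\xi_3\xi_4$ point, the logarithm appearing because $\int\chi(\eta)\e^{\I t\eta_1\eta_2\eta_3}\dd\eta\sim|t|^{-1}\log|t|$ (integrate out $\eta_1$ to produce a factor $\min(1,(|t|\,|\eta_2\eta_3|)^{-1})$, then integrate over the region $|\eta_2\eta_3|\leq|t|^{-1}$, whose area is $\sim|t|^{-1}\log|t|$). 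Summing the $O(1)$ many local pieces gives the theorem.

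The main obstacle is exactly this uniform singularity analysis, in two respects. First, one must show the classification is \emph{exhaustive}: no stratum of $\mathcal S$ --- in particular no lower-dimensional one, where several but not all $\cos\xi_j$ vanish or where components of $\mathcal S$ meet --- may carry a singularity decaying more slowly than the stated rate. This is a genuine constraint: a corank-two germ whose cubic part degenerates (e.g.\ a residual $\sim\xi_2^3\xi_3$, giving $|t|^{-4/3}$ in $d=4$), or a higher $A_k$ on a fold curve (e.g.\ $A_4$ in $d=2$, giving $|t|^{-7/10}$), would already contradict the theorem, so one must genuinely verify non-degeneracy of the relevant cubic and quartic forms and the absence of deeper degeneracies; this is heaviest in $d=4$, where $\mathcal S$ is three-dimensional. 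Second --- and this is the delicate point --- passing from pointwise normal forms to uniform bounds forces one to estimate oscillatory integrals whose phase is a small perturbation of a model (e.g.\ $\xi_1^2\pm\xi_2^4+b(v)\xi_2^3+\cdots$, or the $\xi_2\xi_3\xi_4$ model with the elliptic variable and the perturbation terms treated as parameters) without losing the sharp exponent in the transition regions where two strata merge and no clean normal form is available. It is here that one needs the full strength of uniform Newton-polyhedron oscillatory-integral estimates rather than a case-by-case normal-form computation.
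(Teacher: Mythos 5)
Your high-level strategy coincides with the paper's: isolate critical points of $v\cdot\xi-\omega(\xi)$, classify the germ modulo affine functions by singularity type, identify the worst singularities at $(\pi/2,\dots,\pi/2)$ (which you correctly compute to be $A_3$ for $d=2$, $D_4^-$ for $d=3$, and a product-of-three-lines / $T_{4,4,4}$-type germ for $d=4$), and convert each type into a time-decay exponent. The paper's one cosmetic departure is to replace $\Phi-\Phi(\xi^0)$ by the polynomial $\phi=\phi_1\phi_2$ so that no square roots appear; this is a convenience, not a fork in the road.

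The genuine gap is that you explicitly name, and then defer, the two steps that constitute essentially all of the paper's technical content. The exhaustiveness check is not a routine transversality remark: the paper's Section~2 is a full case-by-case stratification by which $\cos\xi_j^0$ vanish, and the subtlest strata are \emph{not} the fully degenerate point. In $d=2$, ruling out $A_k$, $k\geq 3$, on the interior stratum $c_1c_2\neq 0$ requires showing that the polynomial system \eqref{eq2}--\eqref{eq3} has no solution with $|c_j|<1$, which takes a dedicated lemma (the sign argument via factoring the left side of \eqref{eq3}). In $d=3$ the two-$c_j$-zero stratum in fact carries an $A_5$ singularity (at $c_3=2/(7+3\sqrt5)$), which you did not notice; it happens to have the same singular index $1/3$ as $D_4$, but that equality has to be checked, not assumed, and if the stratum had carried $A_6$ your bound would fail. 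The three-$c_j$-nonzero stratum in $d=3$ (Case~4bii) is harder still: deciding whether an $A_4$ or worse can occur reduces to the system \eqref{case 4bii d=4}, \eqref{condition 4bii d=4 violated}, which the paper disposes of only by a numerical root search in the Appendix. Your diagnosis that $d=4$ is ``heaviest'' for exhaustiveness is backwards: there the rank inequality of Lemma~\ref{rank} kills all the intermediate strata with $O(|\lambda|^{-3/2})$ immediately, and $d=3$ is in fact the demanding case. Finally, the uniform bound for the $T_{4,4,4}$-type germ in $d=4$ is not a Newton-polyhedron exercise on the frozen model $\eta_1\eta_2\eta_3$: the paper needs Karpushkin's uniform-in-perturbation theorem precisely because the cubic has a non-isolated critical set and the higher-order terms and the affine parameter $v$ cannot be scaled away. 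You flag this as the ``delicate point'' but do not supply an argument, so the $\log$-loss in \eqref{decay d=4} as a \emph{uniform} statement remains unproved in your proposal.
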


\begin{remark}
(i) The exponent in the estimates \eqref{decay d=2}, \eqref{decay d=3}, \eqref{decay d=4} is sharp. In fact, there are vectors $v\in \mathbb{R}^d$ and non-zero constants $c_d$ $(d=2,3,4)$ such that 
\begin{align}
\lim_{t\to +\infty} t^\frac34 |I(t, tv)|=c_2 \quad \mbox{for}\quad d=2,\\
\lim_{t\to +\infty} t^\frac76 |I(t, tv)|=c_3 \quad \mbox{for}\quad d=3,\\
\lim_{t\to +\infty} \frac{t^\frac32}{\log t} |I(t, tv)|=c_4 \quad \mbox{for}\quad d=4,
\end{align} 
see \cite{MR1330607} for the case $d=4$ and \cite{MR2854839} for similar observations.

\noindent (ii) We conjecture that for $d\geq 5$, the following estimate is true,
\begin{align}\label{decay d=5}
\|I(t,\cdot)\|_{\ell^{\infty}(\Z^d)}&\leq C_{a,d}(1+|t|)^{-\frac{2d+1}{6}}\log(2+|t|)^{d-4}.
\end{align}
We remark here that the original conjecture of Kevrekidis and Stefanov \cite{MR2150357} can be proved by relatively simple stationary phase arguments (see Remark \ref{remark d3case}). However, we expect that the better estimates \eqref{decay d=5} hold. 

\noindent (iii) The estimates \eqref{decay d=2}--\eqref{decay d=4} continue to hold if $\ell^{\infty}(\Z^d)$ is replaced by $L^{\infty}(\R^d)$. 

\noindent (iv) It will be necessary to prove Theorem \ref{theorem decay KG} in each case $d=2,3,4$ separately (see Propositions \ref{d2case}, \ref{d3case}, \ref{d4case}). This is in contrast to the Schrödinger case in \cite{MR2150357}, where the same proof works in all dimensions. 
\end{remark}

If $x=vt$, where $v\in \R^d$ is the velocity, then the oscillatory integral \eqref{oscillatory integrals KG} takes the form
\begin{align}\label{oscillatory integrals KG velocity}
J(t,v):=\int_{[0,2\pi]^d}\e^{\I t \Phi(v,\xi)}a(\xi)\rd\xi,\quad \Phi(v,\xi)=v\cdot\xi-\omega(\xi).
\end{align}
In fact, the proof of Theorem \ref{theorem decay KG} gives more precise information on the set of velocities for which the indicated decay occurs. These velocities are the images, under the map $\xi\mapsto\nabla\omega(\xi)$, of the critical points of the phase function $\Phi(\cdot,v)$. The time-decay of $J(t,v)$ is governed by the degeneracy of the phase function at the critical points lying within the support of $a$. For a fixed value of the parameter $v=v_0$ a critical point of $\Phi(\cdot)=\Phi(v_0,\cdot)$ is a point $\xi=\xi_0$ where the gradient $\nabla \Phi(\xi)$ vanishes; in the case of the phase function in \eqref{oscillatory integrals KG velocity} this happens if $\nabla\omega(\xi_0)=v_0$. If there are no critical points, then $J(t,v_0)$ decays faster than any polynomial in $t$. Since the Klein-Gordon equation has finite speed of propagation, $|\nabla\omega(\xi)|\leq 1$, there are no critical points if $|v_0|>1$; this follows from the alternative formula
\begin{align*}
\omega(\xi)^2=1+\sum_{j=1}^d\sin^2(\xi_j).
\end{align*}
The region $|v|\leq 1$ in velocity space is called the light cone. Hence, the solution $u(t,x)$ decays rapidly outside the light cone. In fact, since $\omega(\cdot)$ is analytic, the decay is exponential; this can be established by the method of steepest descent, like in \cite{MR1611132,MR3650321}, but we will not pursue the issue here. Inside the light cone, there are critical points. Generically (i.e.\ for most values of $v_0$) these critical points are non-degenerate, that is $\det\Hphi(\xi_0)\neq 0$. We note in passing that the Hessian is invariantly defined (i.e.\ invariant under changes of coordinates) at a critical point. The stationary phase method (see e.g.\ \cite{MR1232192}) yields a $|t|^{-d/2}$ decay at such points. However, unlike in the continuous case, there are caustics, i.e.\ regions inside the light cone $|v|<1$ where the solution decays slower, at a rate $|t|^{-\sigma}$ (possibly with an additional logarithmic loss $\log^k|t|$), where $d/2-\sigma>0$ is called the order of the caustic \cite{MR405513} or the singular index \cite{MR2919697}. In the simplest case, 
\begin{align}\label{corank one singularity}
\rank \Hphi(\xi_0)=d-1,\quad \nabla\det\Hphi(\xi_0)\neq 0.
\end{align}
Then $\Sigma:=\{\xi\in \T^d:\det\Hphi(\xi)=0\}$ is a smooth $d-1$ dimensional manifold, and there exists a (non-unique) kernel vector field, i.e.\ a smooth non-zero vector field $V$ along $\Sigma$ such that $\Hphi(\xi)(V(\xi))=0$ for all $\xi\in\Sigma$. A phase function $\Phi$ satisfying the condition \eqref{corank one singularity} is said to have a \emph{corank one} singularity. The simplest corank one singularity occurs when, for every $\xi\in\Sigma$, $\ker\Hphi(\xi)$ intersects $T_{\xi}\Sigma$ transversally, or equivalently, $(\nabla_{\xi}\det\Hphi)\cdot V\neq 0$ on $\Sigma$. This singularity is  called the (Whitney) ``fold". In the classification of Arnol'd \cite{MR0397777} the fold is called an $A_2$ singularity \cite{MR2896292}, and the oscillatory integral \eqref{oscillatory integrals KG velocity} reduces to an Airy integral in one variable \cite{MR196254,MR410050} (after integrating out the other $d-1$ variables by stationary phase). The next more complicated singularity, called the ``cusp" or $A_3$ singularity, occurs at points in $\Sigma$ where $(\nabla_{\xi}\det\Hphi)\cdot V$ vanishes to first order. A systematization of these ideas gives rise to the Thom-Boardman classes. We refer the interested reader to \cite{MR0494220,MR0341518,MR2896292} for an introduction to singularity theory. The situation becomes more complicated if we allow the parameter $v$ to vary. Then we have to consider a \emph{family} of functions as opposed to a single function. This introduces  (topological) notions of ``typicality" (or transversality) and ``stability" in the space of functions depending on a given number of parameters. In our case, the number of parameters equals the dimension $d$ of the underlying space $\Z^d$. Since we are considering $d\leq 4$ here, the only stable singularities are Thom's \emph{seven elementary catastrophes} \cite[15.1]{MR0494220}; in the terminology of Arnol'd these are the $A_2,A_3,A_4,A_5,D_4^-,D_4^+,D_5$ singularities. For the specific phase in \eqref{oscillatory integrals KG velocity}, we will show in Section \ref{section Singularities of the phase function} that all these, except $D_5$, appear in $d\leq 4$ dimensions. However, an additional (unstable) singularity appears in $d=4$. More precisely, in $d=2$, there are only $A_k$ ($k\leq 3$) singularities. In $d=3$, the phase function has a more degenerate $D_4^-$ type singularity. In $d=4$, the phase function has only critical points with finite multiplicity, and in the most degenerate case is similar to a hyperbolic singularity ($T_{4,4,4}$ in the classification of Arnol'd \cite[15.1]{MR2896292}). 
The critical point of a hyperbolic singularity is (complex) isolated;
however, our uniform estimates hold true for more general phase functions having non-isolated critical points. From the knowledge of the type of singularity, we can determine the singular index. For the $A_k$, $D_k$ singularities (and hence in $d=2,3$) this follows from the work of Duistermaat \cite{MR405513}.
For the unstable singularity in $d=4$ we use a result of Karpushkin \cite{MR731895}. 

For ease of reference, table \ref{table:normal forms} lists the normal forms and singularity indices for $A_k$, $D_k$ and $T_{p,q,r}$ type singularities (compare \cite[15.1]{MR2896292}, \cite[6.1.10]{MR2919697}). The normal form contains the ``active variables" only. By this we mean the following: The Splitting Lemma \cite[14.12]{MR0494220} says that, near a critical point, a smooth function $\phi:\R^d\to\R$ can be expressed in local coordinates as
\begin{align*}
\phi(x_1,\cdot,x_d)=f(x_1,\ldots,x_r)+Q(x_{r+1},\ldots,x_d),
\end{align*}
where $r$ is the corank of $\phi$ (or the number of active variables) at the critical point. Table \ref{table:normal forms} lists the normal forms of $f$, i.e.\ after a change of coordinates, $f$ reduces to one of the tabulated normal forms in the cases encountered here (at least in $d\leq 3$; the $T_{4,4,4}$ singularity is tabulated for comparison only).

\def\arraystretch{1.5}
\begin{table}\label{table:normal forms}
\begin{equation*}
\begin{array}{||l|l|l||}\hline\hline
  \text{Type}\ &  \text{Normal form}\  & \text{Singular index $d/2-\sigma$}\\ \hline
A_{k}, k\geq 1 & x_1^{k+1}& \frac{k-1}{2k+2}\\ \hline
  D_{k}, k\geq 4& x_1^2x_2+x_2^{k-1}&  \frac{k-2}{2k-2}\\ \hline
  T_{4,4,4}& x_1^4+x_2^4+x_3^4+ax_1x_2x_3,\, a\neq 0&  \frac{1}{2}\\
\hline\hline
\end{array}
\end{equation*}
\caption{Normal forms and singularity index for $A_k$, $D_k$ and $T_{4,4,4}$ type singularities.}
\end{table}

\subsection{Strichartz estimates}

As a consequence of Theorem \ref{theorem decay KG} we obtain Strichartz estimates. The proof follows from the (by now) standard argument of Keel--Tao \cite{MR1646048}. More precisely, we apply \cite[Theorem 1.2]{MR1646048} to the operator $U(t)=\e^{-\I t\sqrt{\mathbf{1}-\Delta_x}}$. For fixed $t$, $U(t)$ is a unitary operator on the Hilbert space $H=\ell^2_x$. Here and henceforth, $\ell_x^r=\ell^r(\Z^d)$ denote the spatial Lebesgue spaces. The mixed space-time Lebesgue spaces $L_t^q\ell_x^r$ are endowed with the norms
\begin{align*}
\|F\|_{L_t^q\ell_x^r}=\big(\int_{\R}\big(\sum_{x\in\Z^d}|F(t,x)|^r\big)^{\frac{q}{r}}\big)^{\frac{1}{q}}
\end{align*}
for $1\leq q,r<\infty$, with obvious modifications for $q=\infty$ or $r=\infty$. We recall that a pair of exponents $(q,r)$ is called $\sigma$-admissible \cite[Definition 1.1]{MR1646048} if 
\begin{align*}
q,r\geq 2,\quad (q,r,\sigma)\neq(2,\infty,1),\quad \frac{1}{q}+\frac{\sigma}{r}\leq\frac{\sigma}{2}.
\end{align*}
If equality holds in the last condition, then $(q,r)$ is said to be \emph{sharp} $\sigma$-admissible.
Taking $\sigma$ as the decay rate in \eqref{decay sigma}--\eqref{decay d=4}, the combination of Duhamel's formula \eqref{Duhamel formula} with Theorem \ref{theorem decay KG} and \cite[Theorem 1.2]{MR1646048} yields the following family of Strichartz estimates.

\begin{theorem}\label{theorem Strichartz}
Let $u$ be a solution of the Cauchy problem \eqref{Cauchy problem DKG}. Let $q,r,\overline{q},\overline{r}\geq 2$,
\begin{equation}\label{Strichartz (q,r)}
\begin{split}
\begin{cases}
\frac{1}{q}\leq \frac{3}{4}\left(\frac{1}{2}-\frac{1}{r}\right)\quad &\mbox{if } d=2,\\
\frac{1}{q}\leq \frac{7}{6}\left(\frac{1}{2}-\frac{1}{r}\right)\quad &\mbox{if } d=3,\\
\frac{1}{q}< \frac{3}{2}\left(\frac{1}{2}-\frac{1}{r}\right)\quad &\mbox{if } d=4,
\end{cases}
\end{split}
\end{equation}
and similarly for $\overline{q},\overline{r}$.
Then $u$ satisfies the estimate
\begin{align}\label{Strichartz estimate equation}
\|u\|_{L_t^q \ell_x^r}
\leq C_{q,r,\overline{q},\overline{r}}(\|f\|_{\ell_x^2}+\|g\|_{\ell_x^2}+
\|F\|_{L_t^{\overline{q}'}\ell_x^{\overline{r}'}}).
\end{align}
\end{theorem}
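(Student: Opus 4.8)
The plan is to deduce Theorem~\ref{theorem Strichartz} from the dispersive bound of Theorem~\ref{theorem decay KG} by the $TT^*$ method, in the form of Keel--Tao \cite[Theorem 1.2]{MR1646048}, applied to the half-wave group $U(t)=\e^{-\I t\sqrt{\mathbf{1}-\Delta_x}}$ on the Hilbert space $H=\ell_x^2$. The first step is to verify the two hypotheses of that theorem. The energy estimate is immediate: under the Fourier transform $\ell^2(\Z^d)\to L^2(\T^d)$, which is an isometry, $U(t)$ becomes multiplication by the unimodular symbol $\e^{-\I t\omega(\xi)}$, so $U(t)$ is unitary on $\ell_x^2$ and $U(s)^*=U(-s)$. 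For the dispersive estimate, one observes that $U(t)U(s)^*=U(t-s)$ acts on $\ell_x^2$ by convolution with the kernel $(2\pi)^{-d}I(t-s,\cdot)$, where $I$ is the oscillatory integral \eqref{oscillatory integrals KG} with $a\equiv 1$; hence Young's inequality and Theorem~\ref{theorem decay KG} give
\[
\|U(t)U(s)^*\|_{\ell_x^1\to\ell_x^\infty}\le C_d\,(1+|t-s|)^{-\sigma_d}\,(\log(2+|t-s|))^{\kappa_d},\qquad t,s\in\R,
\]
with $(\sigma_d,\kappa_d)=(3/4,0),\,(7/6,0),\,(3/2,1)$ for $d=2,3,4$ respectively.

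With these two inputs, \cite[Theorem 1.2]{MR1646048} supplies, for all sharp $\sigma$-admissible pairs $(q,r)$ and $(\overline q,\overline r)$, the homogeneous estimate $\|U(t)u_0\|_{L_t^q\ell_x^r}\le C\|u_0\|_{\ell_x^2}$ and the retarded inhomogeneous estimate $\|\int_{s<t}U(t)U(s)^*G(s)\,\rd s\|_{L_t^q\ell_x^r}\le C\|G\|_{L_t^{\overline q'}\ell_x^{\overline r'}}$. To relate the propagators in Duhamel's formula \eqref{Duhamel formula} to $U$, write $\cos(t\sqrt{\mathbf{1}-\Delta_x})=\tfrac12(U(t)+U(-t))$ and $\frac{\sin(t\sqrt{\mathbf{1}-\Delta_x})}{\sqrt{\mathbf{1}-\Delta_x}}=\tfrac{1}{2\I}(U(-t)-U(t))(\mathbf{1}-\Delta_x)^{-1/2}$. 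Since $\omega\ge 1$, the symbol $\omega^{-1}$ is smooth on $\T^d$, so $(\mathbf{1}-\Delta_x)^{-1/2}$ is convolution with a rapidly decaying, hence $\ell^1(\Z^d)$, kernel and is bounded on every $\ell_x^p$, $1\le p\le\infty$. Combining this with the isometry $t\mapsto -t$ of $L_t^q$ and the identity $U(-s)=U(s)^*$, the $f$- and $g$-contributions to $u$ are bounded by $C\|f\|_{\ell_x^2}$ and $C\|(\mathbf{1}-\Delta_x)^{-1/2}g\|_{\ell_x^2}\le C\|g\|_{\ell_x^2}$, while the Duhamel term—after applying $(\mathbf{1}-\Delta_x)^{-1/2}$ to $F$ and restricting the time variable, splitting into $t\ge0$ and $t\le0$ and using time reversal for the latter—is controlled by the retarded inhomogeneous estimate applied to $(\mathbf{1}-\Delta_x)^{-1/2}F$, hence by $C\|F\|_{L_t^{\overline q'}\ell_x^{\overline r'}}$. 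Finally, the passage from sharp $\sigma$-admissible pairs to the full ranges in \eqref{Strichartz (q,r)} is the nesting $\ell_x^{r_0}\hookrightarrow\ell_x^r$ for $r_0\le r$ (valid on $\Z^d$): the condition $\tfrac1q\le\sigma(\tfrac12-\tfrac1r)$ says precisely that some sharp $\sigma$-admissible pair $(q,r_0)$ has $r_0\le r$, and symmetrically for the source exponents.

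The dimension-dependent points, which is where some care is needed, are as follows. For $d=2$ one has $\sigma=3/4<1$, so every admissible pair has $q>2$ and no endpoint occurs. For $d=3$, $\sigma=7/6>1$ and the admissible range includes the double endpoint $(q,r)=(2,14)$, still covered by \cite[Theorem 1.2]{MR1646048} since $(q,r,\sigma)\neq(2,\infty,1)$. The only real obstacle is $d=4$, where Theorem~\ref{theorem decay KG} carries a logarithmic loss, so the hypothesis of \cite[Theorem 1.2]{MR1646048} is not available with the critical exponent $\sigma=3/2$. To get around this I would note that for every $\sigma'<3/2$ one has $(1+|t|)^{-3/2}\log(2+|t|)\le C_{\sigma'}|t|^{-\sigma'}$ for all $t\neq0$, so the dispersive hypothesis does hold with exponent $\sigma'$; given finitely many (in fact two) exponent pairs obeying the \emph{strict} inequalities in \eqref{Strichartz (q,r)}, one can fix a single $\sigma'<3/2$, close enough to $3/2$, for which all of them are $\sigma'$-admissible (and $\neq(2,\infty,1)$, since then $\sigma'>1$), and then argue as above. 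This is exactly why the $d=4$ condition in \eqref{Strichartz (q,r)} is an open one; apart from this, the argument is the standard Keel--Tao scheme, all of the analytic content having already been absorbed into Theorem~\ref{theorem decay KG}.
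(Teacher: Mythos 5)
Your proof is correct and follows the same route the paper takes: apply Keel--Tao \cite[Theorem 1.2]{MR1646048} to $U(t)=\e^{-\I t\sqrt{\mathbf 1-\Delta_x}}$ with the dispersive bound of Theorem~\ref{theorem decay KG}, reduce the propagators in Duhamel's formula to $U(\pm t)$, and use boundedness of $(\mathbf 1-\Delta_x)^{-1/2}$ on $\ell^p_x$ together with the Lebesgue-space nesting. The paper is very terse here (one sentence before the theorem plus Remark~\ref{remark Strichartz pairs}), and your write-up usefully spells out two points the paper leaves implicit: the reduction of $\cos$ and $\sin/\omega$ to $U(\pm t)$, and, most importantly, the $d=4$ treatment where the logarithmic loss forces one to run Keel--Tao at an exponent $\sigma'<3/2$ (absorbing $\log(2+|t|)$ into $|t|^{3/2-\sigma'}$) and then choose $\sigma'$ close enough to $3/2$ to cover a given pair obeying the strict inequality --- which is precisely why the $d=4$ condition in \eqref{Strichartz (q,r)} is open.
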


\begin{remark}\label{remark Strichartz pairs}
\noindent $(i)$ We call $(q,r)$ a Strichartz pair if it satisfies \eqref{Strichartz (q,r)}. For any such pair there exists $r_0\in [2,r]$ such that $(q,r_0)$ satisfies \eqref{Strichartz (q,r)} with equality (in $d=4$ we subtract a fixed, arbitrarily small $\epsilon>0$ from the right hand side). Thus there is a $\tau\in [0,1]$ such that
\begin{align*}
(1/q,1/r_0)=\tau(1/q_0,0)+(1-\tau)(0,1/2),
\end{align*}
where $q_0=8/3$ if $d=2$, $q_0=12/7$ if $d=3$ and $q_0=4/3+\epsilon'$ if $d=4$ (here $\epsilon'>0$ can be made arbitrarily small by choosing $\epsilon$ sufficiently small). Note that the Lebesgue spaces over $\Z^d$ form a filtration, i.e.\ $\ell^{p_1}(\Z^d)\subset \ell^{p_2}(\Z^d)$ for $p_1\leq p_2$. This fact, together with the Gagliardo-Nirenberg and Young's inequality (compare Remark 5 in \cite{MR2578796}) yields
\begin{align*}
\|u\|_{L_t^q\ell_x^r}\leq \|u\|_{L_t^q\ell_x^{r_0}}\leq \|u\|^{\tau}_{L_t^{q_0}\ell_x^{\infty}}\|u\|^{1-\tau}_{L_t^{\infty}\ell_x^{2}}=:\|u\|_{L_t^{q_0}\ell_x^{\infty}\cap L_t^{\infty}\ell_x^{2}}.
\end{align*}
Hence, all the Strichartz estimates in Theorem \ref{theorem Strichartz} can be subsumed in the inequality
\begin{align}\label{Strichartz subsumed}
\|u\|_{L_t^{q_0}\ell_x^{\infty}\cap L_t^{\infty}\ell_x^{2}}\leq 
C_{\overline{q},\overline{r}}(\|f\|_{\ell_x^2}+\|g\|_{\ell_x^2}+\inf
\|F\|_{L_t^{\overline{q}'}\ell_x^{\overline{r}'}}),
\end{align}
where the infimum is taken over all Strichartz pairs $(\overline{q},\overline{r})$.
\noindent $(ii)$ The Strichartz estimates are also commonly expressed in terms of mapping properties for the operator $U(t)=\e^{-\I t\sqrt{\mathbf{1}-\Delta_x}}$,
\begin{equation}\label{Strichartz mapping properties}
\begin{split}
\|U(t)f\|_{L_t^q\ell_x^r}&\leq C\|f\|_{\ell^2_x},\\
\big\|\int_{\R}U(-t)F(s)\big\|_{\ell_x^2}&\leq C\|F\|_{L_t^{\overline{q}'}\ell_x^{\overline{r}'}},\\
\big\|\int_{s<t}U(t-s)F(s)\big\|_{L_t^q\ell_x^r}&\leq C\|F\|_{L_t^{\overline{q}'}\ell_x^{\overline{r}'}}.
\end{split}
\end{equation}
Note that $(\mathbf{1}-\Delta_x)^{-1/2}$ is a bounded operator on $\ell_x^r$ for every $r\in [1,\infty]$, see \cite[Lemma~1]{MR2150357}; hence \eqref{Strichartz estimate equation} follows from \eqref{Strichartz mapping properties} and \eqref{Duhamel formula}.

\noindent $(iii)$ By \cite[Lemma 3.9]{Palle}
the following sharp $\sigma$-admissible estimates are best possible,
\begin{align*}
\begin{cases}
(q,r)=(\frac83,\infty)\quad &\mbox{if } d=2,\\
(q,r)=(\frac{12}{7},\infty)\quad &\mbox{if } d=3,\\
(q,r)=(\frac43+,\infty)\quad &\mbox{if } d=4,
\end{cases}
\end{align*}
in the sense that Strichartz estimates cannot hold for a pair $(\tilde{q},\infty)$ with $\tilde{q}<q$.
\end{remark}

\subsection{Discrete nonlinear Klein--Gordon equation}
Strichartz estimates can be used in conjunction with a contraction mapping argument to prove global well-posedness for certain nonlinear equations with small initial data. Here we consider the discrete nonlinear Klein--Gordon equation
\begin{align}\label{NLDKG}
u_{tt}-\Delta_x u+u\pm |u|^{2s}u=0,\quad \mbox{in }\R\times\Z^d,
\end{align}
where $(u(0),u_t(0))\in \ell_x^2\times \ell_x^2$ and $s$ satisfies
\begin{equation}\label{s for GWP}
\begin{split}
\begin{cases}
s\geq \frac{4}{3}\quad &\mbox{if } d=2,\\
s\geq \frac{6}{7}\quad &\mbox{if } d=3,\\
s> \frac{2}{3}\quad &\mbox{if } d=4.
\end{cases}
\end{split}
\end{equation}
Given the Strichartz estimates, the proof of the following theorem is standard (see e.g.\  \cite[Theorem 6]{MR2150357}), but we will give proofs of the PDE applications in Section \ref{Section proofs pde applications}.

\begin{theorem}[Global well-posedness for small data]\label{theorem Global well-posedness}
Assume that $s$ satisfies \eqref{s for GWP}. There exists $\epsilon>0$ and a constant $C$ so that, whenever $\|u(0)\|_{\ell_x^2}+\|u_t(0)\|_{\ell_x^2}\leq \epsilon$, then \eqref{NLDKG} has a unique global solution. Moreover, the solution satisfies
\begin{align*}
\|u\|_{L_t^q\ell_x^r}\leq C\epsilon
\end{align*}
for any Strichartz pair $(q,r)$.
\end{theorem}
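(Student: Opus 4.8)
The plan is to deduce Theorem \ref{theorem Global well-posedness} from the Strichartz estimates of Theorem \ref{theorem Strichartz} by a standard fixed-point argument in a suitable Strichartz space. First I would set up the function space: let $q_0$ be the endpoint exponent from Remark \ref{remark Strichartz pairs}(i) ($q_0=8/3$ for $d=2$, $q_0=12/7$ for $d=3$, $q_0=4/3+\epsilon'$ for $d=4$) and work in $X:=L_t^{q_0}\ell_x^{\infty}\cap L_t^{\infty}\ell_x^{2}$, so that by \eqref{Strichartz subsumed} the solution map $\Psi$ defined via Duhamel's formula \eqref{Duhamel formula} with $F=\mp|u|^{2s}u$ satisfies
\begin{align*}
\|\Psi(u)\|_{X}\leq C\big(\|u(0)\|_{\ell_x^2}+\|u_t(0)\|_{\ell_x^2}+\inf\|\,|u|^{2s}u\,\|_{L_t^{\overline{q}'}\ell_x^{\overline{r}'}}\big),
\end{align*}
the infimum over Strichartz pairs $(\overline q,\overline r)$. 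The key is then to bound the nonlinear term $\|\,|u|^{2s}u\,\|_{L_t^{\overline q'}\ell_x^{\overline r'}}$ by a power of $\|u\|_X$ for an admissible choice of dual exponents.

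The heart of the matter is choosing $(\overline q,\overline r)$ so that the nonlinearity closes. Using the filtration $\ell^{p_1}\subset\ell^{p_2}$ for $p_1\le p_2$, which makes the spatial bound essentially free, I would take $\overline r'=2$ (i.e.\ $\overline r=2$, which is a Strichartz pair since $(2,2)$ satisfies \eqref{Strichartz (q,r)} trivially), so that it suffices to control $\|\,|u|^{2s}u\,\|_{L_t^{\overline q'}\ell_x^{2}}$. Pointwise in $t$, interpolating/using the filtration,
\begin{align*}
\big\|\,|u|^{2s}u\,\big\|_{\ell_x^2}\leq \|u\|_{\ell_x^\infty}^{2s}\|u\|_{\ell_x^2},
\end{align*}
and then by Hölder in time, $\|\,|u|^{2s}u\,\|_{L_t^{\overline q'}\ell_x^2}\leq \|u\|_{L_t^{2s\overline q'}\ell_x^\infty}^{2s}\|u\|_{L_t^\infty\ell_x^2}$. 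To absorb this into $\|u\|_X^{2s+1}$ it suffices that $2s\overline q'\geq q_0$ for an admissible $\overline q$, i.e.\ $\overline q'\le q_0/(2s)$; an admissible $\overline q$ exists precisely when $q_0/(2s)\le 1$, equivalently $2s\ge q_0$, which is exactly the condition \eqref{s for GWP} (with the strict inequality in $d=4$ accounting for the $\epsilon'$). Thus $\|\Psi(u)\|_X\le C(\epsilon + \|u\|_X^{2s+1})$, and similarly the difference estimate gives $\|\Psi(u)-\Psi(v)\|_X\le C(\|u\|_X^{2s}+\|v\|_X^{2s})\|u-v\|_X$, using $\big||u|^{2s}u-|v|^{2s}v\big|\lesssim (|u|^{2s}+|v|^{2s})|u-v|$.

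From here the contraction argument is routine: on the ball $B_R=\{u\in X:\|u\|_X\le R\}$ with $R=2C\epsilon$, choosing $\epsilon$ small enough that $CR^{2s}\le 1/2$ makes $\Psi$ map $B_R$ to itself and be a contraction, yielding a unique fixed point $u\in X$, which is the global solution; the bound $\|u\|_{L_t^q\ell_x^r}\le C\epsilon$ for any Strichartz pair then follows from \eqref{Strichartz subsumed} applied to the fixed point (or directly from the embedding $X\hookrightarrow L_t^q\ell_x^r$). Uniqueness in the full space (not just in $B_R$) follows by a standard continuity-in-time argument. The main obstacle — really the only point requiring care — is verifying that the exponent arithmetic in the nonlinear estimate is compatible with admissibility of $(\overline q,\overline r)$; once \eqref{s for GWP} is seen to be exactly the threshold $2s\ge q_0$, everything else is the textbook Keel–Tao/Duhamel contraction scheme, and I would refer to \cite[Theorem 6]{MR2150357} for the details common to the discrete Schrödinger case.
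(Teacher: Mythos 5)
Your proposal is correct and follows essentially the same route as the paper: the same Strichartz space $X=L_t^{q_0}\ell_x^{\infty}\cap L_t^{\infty}\ell_x^{2}$, the same dual exponent pair $(\overline q',\overline r')=(1,2)$, and the same contraction scheme, with the threshold $2s\ge q_0$ coinciding with \eqref{s for GWP}. The only cosmetic difference is that you estimate the nonlinearity by H\"older as $\|\,|u|^{2s}u\,\|_{L_t^1\ell_x^2}\le\|u\|_{L_t^{2s}\ell_x^\infty}^{2s}\|u\|_{L_t^\infty\ell_x^2}$, while the paper writes it as the single norm $\|u\|_{L_t^{2s+1}\ell_x^{2(2s+1)}}^{2s+1}$ and checks that $(2s+1,2(2s+1))$ is admissible (also, your clause ``i.e.\ $\overline q'\le q_0/(2s)$'' has the inequality reversed, but your stated conclusion $2s\ge q_0$ is the right one).
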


Interpolating between the bounds of Theorem \ref{theorem decay KG} and energy conservation for the \emph{linear} DKG \eqref{Cauchy problem DKG} (without forcing term, i.e.\ $F=0$) we get the following decay estimates for the $\ell^p$-norm of the solution. For $2\leq p\leq \infty$,
\begin{align}\label{Lp decay}
\|u(t)\|_{\ell^p_x}\leq C_p(1+|t|)^{-\sigma(p-2)/p}\|(u(0),u_t(0))\|_{\ell^{p'}\times \ell^{p'}},
\end{align}
where $\sigma=\sigma_d$ is the decay rate for $p=\infty$, i.e.
\begin{align}\label{sigmad}
\sigma_d:=\begin{cases}
\frac{3}{4}\quad&\mbox{if }d=2,\\
\frac{7}{6}\quad&\mbox{if }d=3,\\
\frac{3}{2}-\quad&\mbox{if }d=4,
\end{cases}
\end{align}  
where $3/2-$ means $3/2-\epsilon$ for arbitrary fixed $\epsilon>0$. Before stating the next theorem we define
\begin{equation}\label{conditions on p_d and s_d}
\begin{split}
p_2:=\frac{1}{6}(13+\sqrt{97})\approx 3.8,\quad 
p_3:=\frac{1}{14}(27+\sqrt{337})\approx 3.2,\quad 
p_4:=3,\\
s_2:=\frac{1}{12}(1+\sqrt{97})\approx 0.9,\quad s_3:=\frac{1}{28}(\sqrt{337}-1)\approx 0.6,\quad s_4:=\frac{1}{2}.
\end{split}
\end{equation}

The next theorem is related to a conjecture by Weinstein \cite{MR1690199}. 

\begin{theorem}[Decay of small solutions]\label{theorem Decay of small solutions}
Let $d=2,3,4$ and $s>s_d$. Assume that $u$ satisfies the discrete nonlinear Klein--Gordon equation \eqref{NLDKG}. There exists $\epsilon>0$ such that, whenever $\|u(0)\|_{\ell^{p_d'}}+\|u_t(0)\|_{\ell^{p_d'}}\leq \epsilon$, then for every $p\in [2,p_d]$, we have
\begin{align}\label{Lp decay nonlinear}
\|u(t)\|_{\ell^p_x}\leq C_p(1+|t|)^{-\sigma(p-2)/p}\|(u(0),u_t(0))\|_{\ell^{p'}\times \ell^{p'}},
\end{align} 
where $\sigma=\sigma_d$. 
\end{theorem}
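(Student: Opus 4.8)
The plan is to deduce Theorem \ref{theorem Decay of small solutions} from the linear $\ell^p$-decay estimate \eqref{Lp decay}, the Strichartz estimates of Theorem \ref{theorem Strichartz}, and a bootstrap/contraction argument in a carefully chosen space-time norm. First I would write the solution via Duhamel's formula \eqref{Duhamel formula} with $F=\mp|u|^{2s}u$, splitting $u=u_{\mathrm{lin}}+u_{\mathrm{nl}}$, where $u_{\mathrm{lin}}$ solves the linear problem with the given data and $u_{\mathrm{nl}}$ is the Duhamel integral of the nonlinearity. The linear part already obeys \eqref{Lp decay nonlinear} by \eqref{Lp decay}, so the whole game is to show that $u_{\mathrm{nl}}$ satisfies the same (or a better) weighted decay bound, for every $p\in[2,p_d]$, provided the data are small in $\ell^{p_d'}$. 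The natural functional framework is the weighted mixed-norm space
\begin{align*}
\|u\|_{X}:=\sup_{t}\,(1+|t|)^{\sigma_d(p_d-2)/p_d}\|u(t)\|_{\ell^{p_d}_x}+\|u\|_{L_t^{q_0}\ell_x^{\infty}\cap L_t^{\infty}\ell_x^2},
\end{align*}
with $q_0$ as in Remark \ref{remark Strichartz pairs}; the filtration property $\ell^{p_1}\subset\ell^{p_2}$ then automatically upgrades the $\ell^{p_d}$-bound to all $p\in[2,p_d]$ (with the correct exponent $\sigma_d(p-2)/p$, after interpolating the pointwise-in-$t$ bound against the conserved energy, exactly as in \eqref{Lp decay}).

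Next I would estimate the Duhamel term $u_{\mathrm{nl}}(t)$ in $\ell^{p_d}_x$ by combining the dispersive kernel bound with the $\ell^1\to\ell^\infty$ pointwise decay. The key inequality is of the schematic form
\begin{align*}
\|u_{\mathrm{nl}}(t)\|_{\ell^{p_d}_x}\leq C\int_0^t (1+|t-s|)^{-\sigma_d(p_d-2)/p_d}\,\||u(s)|^{2s}u(s)\|_{\ell^{p_d'}_x}\,\d s
= C\int_0^t (1+|t-s|)^{-\sigma_d(p_d-2)/p_d}\,\|u(s)\|_{\ell^{(2s+1)p_d'}_x}^{2s+1}\,\d s,
\end{align*}
using $\||u|^{2s}u\|_{\ell^{p_d'}}=\|u\|_{\ell^{(2s+1)p_d'}}^{2s+1}$. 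One then bounds $\|u(s)\|_{\ell^{(2s+1)p_d'}}$ by interpolating between $\|u(s)\|_{\ell^2}$ (controlled by $\|u\|_X$ via energy) and $\|u(s)\|_{\ell^{p_d}}$ (controlled by $(1+|s|)^{-\sigma_d(p_d-2)/p_d}\|u\|_X$), which is legitimate precisely when $(2s+1)p_d'\in[2,p_d]$ — this is exactly where the algebraic constraints defining $p_d$ and $s_d$ in \eqref{conditions on p_d and s_d} come from. The resulting time integral is of the form $\int_0^t (1+|t-s|)^{-a}(1+|s|)^{-b(2s+1)}\d s$, and the exponents $p_d$, $s_d$ are chosen so that $a=\sigma_d(p_d-2)/p_d$ and $b(2s+1)>1$ with $a+b(2s+1)-1\geq a$, i.e.\ the convolution of the two decay rates reproduces $(1+|t|)^{-a}$ (a standard lemma on such integrals, e.g.\ as used in the continuous small-data scattering literature). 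Together with the analogous Strichartz bound $\|u_{\mathrm{nl}}\|_{L_t^{q_0}\ell_x^\infty\cap L_t^\infty\ell_x^2}\lesssim \|F\|_{L_t^{\overline q'}\ell_x^{\overline r'}}$ from \eqref{Strichartz subsumed}, controlled by $\|u\|_X^{2s+1}$ via Hölder in time and the filtration in space, this closes the estimate $\|u\|_X\leq C\|(\text{data})\|_{\ell^{p_d'}}+C\|u\|_X^{2s+1}$.

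Finally, a standard fixed-point argument for the map $u\mapsto u_{\mathrm{lin}}+u_{\mathrm{nl}}[u]$ on a small ball of $X$ — using the same estimates for differences $u_{\mathrm{nl}}[u]-u_{\mathrm{nl}}[\tilde u]$, which are multilinear in $u,\tilde u$ — gives a unique solution with $\|u\|_X\leq C\epsilon$ once $\epsilon$ is small enough; the bound \eqref{Lp decay nonlinear} for all $p\in[2,p_d]$ then follows by the interpolation/filtration step described above. I expect the main obstacle to be the bookkeeping in the second paragraph: verifying that for each $d=2,3,4$ the triple of conditions "$(2s+1)p_d'\le p_d$", "$b(2s+1)>1$", and "$a+b(2s+1)\ge 1+a$" (with $a,b$ determined by $\sigma_d$ and $p_d$) is simultaneously satisfiable, and that the optimal such $p_d$ is exactly the root of the quadratic in \eqref{conditions on p_d and s_d}; this is a finite but delicate optimization, and the endpoint case $d=4$ additionally needs the $\epsilon$-loss in $\sigma_d=3/2-$ to be absorbed harmlessly, which forces the strict inequality $s>s_4=1/2$ rather than $s\ge 1/2$.
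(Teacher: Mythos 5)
Your overall strategy -- Duhamel's formula, a bootstrap/contraction in a norm weighted by $\langle t\rangle^{\sigma_d(p_d-2)/p_d}$, and a convolution lemma for $\int_0^t\langle t-\tau\rangle^{-a}\langle\tau\rangle^{-b}\dd\tau$ -- is the same as the paper's. However, there is a directional error in the key intermediate step. You propose to control $\|u(\tau)\|_{\ell^{(2s+1)p_d'}}$ by interpolating between $\ell^2$ and $\ell^{p_d}$, which you correctly note requires $(2s+1)p_d'\in[2,p_d]$. But the opposite inequality $(2s+1)p_d'\geq p_d$ holds: indeed $p_d,s_d$ are defined so that $(2s_d+1)p_d'=p_d$ exactly (equivalently $2s_d=p_d-2$), and for $s>s_d$ one has $(2s+1)p_d'>p_d$. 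Your interpolation would therefore be an illegitimate extrapolation. The correct and simpler mechanism, which the paper uses, is the lattice filtration $\ell^{p_1}(\Z^d)\subset\ell^{p_2}(\Z^d)$ for $p_1\le p_2$: since $(2s+1)p'\geq p$, one has $\|u(\tau)\|_{\ell^{(2s+1)p'}}\leq\|u(\tau)\|_{\ell^p}$ directly, yielding the full decay rate $\langle\tau\rangle^{-\sigma(p-2)(2s+1)/p}$ in the Duhamel integral. This is precisely the first condition in \eqref{condition for integral to converge}; the second, $\sigma(p-2)(2s+1)/p>1$, is the integrability/convolution condition you also identify. Solving both with equality and using $\sigma=\sigma_d$ gives the quadratics defining $p_d,s_d$ in \eqref{conditions on p_d and s_d}.

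Two further remarks. First, the Strichartz component $L^{q_0}_t\ell^\infty_x$ in your norm $X$ is superfluous: the paper's proof runs entirely off the interpolated linear decay estimate \eqref{Lp decay} and the conserved $\ell^2$ norm; no Strichartz estimate is invoked. Second, you are right that the endpoint $d=4$ is delicate: with $\sigma_4=3/2$ and $p_4=3$ the two conditions are sharp at $s=1/2$, so the strict inequality $s>s_4$ is needed to absorb the $\epsilon$-loss in $\sigma_4$. Once the embedding is substituted for the interpolation, your argument closes and reproduces the paper's bound.
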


\begin{remark}
Theorem \ref{theorem Decay of small solutions} implies that no standing wave solutions $u(t,x)=\e^{\I\lambda t}\phi(x)$ are possible under the stated smallness assumption. Weinstein \cite{MR1690199} proved the existence of an excitation threshold for the nonlinear Schrödinger equation in the continuum and conjectured that, for $s\geq 2/d$, solutions with sufficiently small initial conditions satisfy $\lim_{t\to\infty}\|u(t)\|_{\ell^p_x}=0$ for all $p\in [1,\infty]$. On the lattice, Kevrekidis and Stefanov \cite{MR2150357} proved that, for $s>d/2$, suitably small solutions of the nonlinear Schrödinger equation actually decay like the free solution in $\ell^p_x$. They also obtained analogues for the nonlinear Klein--Gordon equation in one space dimension. Theorem \ref{theorem Decay of small solutions} establishes analogues of this result in dimensions $d=2,3,4$.
\end{remark}

\subsection{Resolvent estimates and spectral consequences}

Here we consider a stationary version of the discrete Klein--Gordon equation, namely
\begin{align}\label{stationary DKG}
\sqrt{\mathbf{1}-\Delta}\psi+V \psi=\lambda \psi,\quad \psi\in\ell^2(\Z^d).
\end{align} 
We start with a resolvent estimate for the unperturbed operator $\sqrt{\mathbf{1}-\Delta}$. The idea of using Strichartz estimates to prove resolvent estimates is not new. It has appeared e.g.\ in \cite{MR2150357,MR2140267,KochTataru2009,MR3615545,MR3841849}; the authors of \cite{MR3841849} attribute the argument to T.\ ~Duyckaerts. The following results are analogues to \cite[Proposition 3.3]{MR4009459} and \cite[Theorem 4]{MR2150357} for the stationary Schrödinger equation. Again, in the Klein--Gordon case, better estimates are possible; in particular, our resolvent estimates hold in $d=3$, whereas the estimates in \cite{MR4009459,MR2150357} require $d\geq 4$.

\begin{theorem}\label{prop. Resolvent estimates}
Let $d=3,4$. There exists a constant $C$ such that for all $\lambda\in \C$ we have the estimate
\begin{align*}
\|\psi\|_{\ell^{\frac{2\sigma}{\sigma-1}}}\leq C\|(\sqrt{\mathbf{1}-\Delta}-\lambda)\psi\|_{\ell^{\frac{2\sigma}{\sigma+1}}},
\end{align*}
where $\sigma=\sigma_d$ is given by \eqref{sigmad}.
\end{theorem}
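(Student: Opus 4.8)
The plan is to run the argument, attributed to Duyckaerts (see \cite{MR3841849}), which extracts a uniform resolvent bound from the Strichartz estimates. Write $A=\sqrt{\mathbf 1-\Delta}$ and $U(t)=\e^{-\I tA}$, and set $p=\tfrac{2\sigma_d}{\sigma_d-1}$, $p'=\tfrac{2\sigma_d}{\sigma_d+1}$; since $\sigma_d>1$ for $d=3,4$ we have $1<p'<2<p<\infty$ and $\tfrac1{p'}-\tfrac1p=\tfrac1{\sigma_d}$ (this is exactly why $d=2$ is excluded, as there $\sigma_d<1$). It suffices to prove the operator bound $\|(A-\lambda)^{-1}\|_{\ell^{p'}_x\to\ell^p_x}\le C$ uniformly over $\lambda$ with $\operatorname{Im}\lambda\neq0$. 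The case of real $\lambda$ then follows by a routine limiting argument: for $\psi\in\ell^{p'}_x\subset\ell^2_x$ with $(A-\lambda)\psi=f\in\ell^{p'}_x$, approximate $\psi$ by $\psi_\epsilon=(A-\lambda-\I\epsilon)^{-1}f$, use that $A$ has purely absolutely continuous spectrum on $\ell^2(\Z^d)$ (hence $\epsilon(A-\lambda-\I\epsilon)^{-1}\psi\to0$ in $\ell^2_x$, so $\psi_\epsilon\to\psi$ pointwise along a subsequence), and conclude by Fatou's lemma.

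The key point is that the pair $(2,p)$ is sharp $\sigma_d$-admissible in the sense of \cite{MR1646048}: indeed, with $q=2$ and $r=p$ one has $\tfrac1q+\tfrac{\sigma_d}{r}=\tfrac12+\tfrac{\sigma_d}{p}=\tfrac{\sigma_d}2$, and the forbidden endpoint $(2,\infty,1)$ is avoided since $p<\infty$ and $\sigma_d\neq1$. By Theorem~\ref{theorem decay KG} we have $\|U(t)\|_{\ell^1_x\to\ell^\infty_x}\lesssim|t|^{-\sigma_d}$ (in $d=4$ the logarithm is absorbed into $\sigma_4=\tfrac32-$, cf.\ \eqref{sigmad}), so \cite[Theorem~1.2]{MR1646048} — equivalently the inhomogeneous estimate in \eqref{Strichartz mapping properties} applied with $(q,r)=(\overline q,\overline r)=(2,p)$ — yields
\begin{align*}
\Big\|\int_{s<t}U(t-s)G(s)\,\rd s\Big\|_{L^2_t\ell^p_x}\le C\,\|G\|_{L^2_t\ell^{p'}_x}.
\end{align*}

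Now fix $h\in\ell^{p'}_x$ and $\lambda$ with $\nu:=\operatorname{Im}\lambda>0$. Starting from the elementary identity $(A-\lambda)^{-1}=\I\int_0^\infty\e^{\I\tau\lambda}U(\tau)\,\rd\tau$, set $G(s,x)=\e^{-\I s\lambda}h(x)\mathbf 1_{\{s\le0\}}$ and $v(t)=\int_{s<t}U(t-s)G(s)\,\rd s$; a direct substitution shows $v(t)=-\I\,\e^{-\I t\lambda}(A-\lambda)^{-1}h$ for $t\le0$. Since $|\e^{-\I t\lambda}|=\e^{t\nu}\le1$ for $t\le0$ and $\int_{-\infty}^0\e^{2t\nu}\,\rd t=\tfrac1{2\nu}$, we obtain
\begin{align*}
\|v\|_{L^2_t((-\infty,0])\ell^p_x}=(2\nu)^{-1/2}\,\|(A-\lambda)^{-1}h\|_{\ell^p_x},\qquad
\|G\|_{L^2_t\ell^{p'}_x}=(2\nu)^{-1/2}\,\|h\|_{\ell^{p'}_x}.
\end{align*}
Inserting these into the inhomogeneous Strichartz estimate above, the factors $(2\nu)^{\pm1/2}$ cancel and we get $\|(A-\lambda)^{-1}h\|_{\ell^p_x}\le C\|h\|_{\ell^{p'}_x}$ with $C$ independent of $\lambda$; the case $\nu<0$ is handled identically by using $\mathbf 1_{\{s\ge0\}}$ instead of $\mathbf 1_{\{s\le0\}}$ (or by complex conjugation).

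The step I expect to require the most care is the bookkeeping around the admissible exponent in $d=4$: one must verify that $(2,\tfrac{2\sigma_4}{\sigma_4-1})$ genuinely lies in the range covered by Theorem~\ref{theorem decay KG}/Theorem~\ref{theorem Strichartz}, which forces the logarithmic loss to be absorbed into a slightly sub-critical $\sigma_4=\tfrac32-\epsilon$. Everything else is the scaling identity above, whose only delicate point is checking that the truncated forcing $G$ really has $L^2_t\ell^{p'}_x$-norm equal to $(2\nu)^{-1/2}\|h\|_{\ell^{p'}_x}$, so that the dependence on $\nu=\operatorname{Im}\lambda$ disappears and the bound becomes uniform.
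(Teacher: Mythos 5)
Your proof is correct, and it is recognizably the same underlying ``Duyckaerts argument'' the paper invokes, but you execute it by a genuinely different route. The paper applies the Strichartz estimate \emph{localized in time to $[0,T]$} to the function $u(t)=\e^{\I t\lambda}\psi$, viewed as a solution of $\I\partial_t u+H_0u=(H_0-\lambda)u$; because $u$ and the source term are both proportional to $\e^{\I t\lambda}$, the same scalar factor $c(\lambda,T)=\|\e^{\I t\lambda}\|_{L^2(0,T)}$ appears on both sides, while the homogeneous term contributes $\|\psi\|_{\ell^2}$ without this factor. Choosing the half-plane so that $c(\lambda,T)\ge T^{1/2}$, dividing by $c(\lambda,T)$, and letting $T\to\infty$ kills the $\ell^2$ term and yields the claim for \emph{all} $\lambda$ in the closed half-plane directly, with no separate limiting argument. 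You instead use the global retarded estimate $\bigl\|\int_{s<t}U(t-s)G(s)\,\rd s\bigr\|_{L^2_t\ell^p_x}\lesssim\|G\|_{L^2_t\ell^{p'}_x}$ with an explicit half-line source $G(s)=\e^{-\I s\lambda}h\,\mathbf 1_{\{s\le0\}}$, recognize the resolvent from $(A-\lambda)^{-1}=\I\int_0^\infty\e^{\I\tau\lambda}U(\tau)\,\rd\tau$, and observe the $(2\nu)^{-1/2}$ factors cancel. This gives the uniform operator bound cleanly for $\operatorname{Im}\lambda\neq0$, after which you need the additional limiting step (via a.c.\ spectrum plus Fatou) to reach real $\lambda$. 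The paper's localization trick buys you the real case for free and avoids any spectral-theoretic input; your version makes the cancellation of the $\operatorname{Im}\lambda$-dependence more transparent and uses only the retarded estimate, not the homogeneous one. Both rely on the same sharp $\sigma_d$-admissible endpoint $(q,r)=(2,\tfrac{2\sigma_d}{\sigma_d-1})$ (with $\sigma_4=\tfrac32-\epsilon$ absorbing the logarithm), and your bookkeeping there is accurate, including the observation that $\sigma_d>1$ is what makes $p=\tfrac{2\sigma_d}{\sigma_d-1}$ finite and excludes $d=2$.
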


\begin{corollary}\label{Corollary spectral consequences}
Let $d=3,4$. There exists $\epsilon>0$ such that, whenever $\|V\|_{\ell^{\sigma}}\leq \epsilon$, then the eigenvalue problem \eqref{stationary DKG} has no nontrivial solution.
\end{corollary}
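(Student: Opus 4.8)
The plan is to deduce Corollary \ref{Corollary spectral consequences} from the resolvent estimate in Theorem \ref{prop. Resolvent estimates} by a routine perturbative (Birman--Schwinger type) argument. Suppose $\psi\in\ell^2(\Z^d)$ is a nontrivial solution of \eqref{stationary DKG}, so that $(\sqrt{\mathbf{1}-\Delta}-\lambda)\psi=-V\psi$. First I would apply Theorem \ref{prop. Resolvent estimates} to get
\begin{align*}
\|\psi\|_{\ell^{\frac{2\sigma}{\sigma-1}}}\leq C\|V\psi\|_{\ell^{\frac{2\sigma}{\sigma+1}}}.
\end{align*}
Next, by Hölder's inequality on $\Z^d$ with the exponent relation $\frac{\sigma+1}{2\sigma}=\frac1\sigma+\frac{\sigma-1}{2\sigma}$, one has $\|V\psi\|_{\ell^{\frac{2\sigma}{\sigma+1}}}\leq \|V\|_{\ell^\sigma}\|\psi\|_{\ell^{\frac{2\sigma}{\sigma-1}}}$. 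Combining the two displays yields
\begin{align*}
\|\psi\|_{\ell^{\frac{2\sigma}{\sigma-1}}}\leq C\|V\|_{\ell^\sigma}\,\|\psi\|_{\ell^{\frac{2\sigma}{\sigma-1}}}.
\end{align*}
Since $\sigma=\sigma_d>1$ in the relevant dimensions $d=3,4$, the exponent $\frac{2\sigma}{\sigma-1}$ is finite, and because $\psi\in\ell^2\subset\ell^{\frac{2\sigma}{\sigma-1}}$ (as $\frac{2\sigma}{\sigma-1}\geq 2$ and $\ell^2(\Z^d)\subset\ell^p(\Z^d)$ for $p\geq 2$) the norm $\|\psi\|_{\ell^{\frac{2\sigma}{\sigma-1}}}$ is finite and nonzero. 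Dividing through, we obtain $1\leq C\|V\|_{\ell^\sigma}$. Choosing $\epsilon:=1/(2C)$ then forces a contradiction whenever $\|V\|_{\ell^\sigma}\leq\epsilon$, so no nontrivial solution can exist.

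The one point that needs a little care — and the closest thing to an obstacle here — is confirming that the embedding chain is legitimate, i.e.\ that $\psi$ indeed lies in the space on the left-hand side so that the estimate is not vacuous; this uses only the filtration property $\ell^{p_1}(\Z^d)\subset\ell^{p_2}(\Z^d)$ for $p_1\leq p_2$ already invoked in Remark \ref{remark Strichartz pairs}(i), together with $\frac{2\sigma}{\sigma-1}>2$. One should also note that $V\psi\in\ell^{\frac{2\sigma}{\sigma+1}}$ is guaranteed by the same Hölder step (the right-hand side is finite), so that applying the resolvent estimate is justified. Everything else is a one-line application of Hölder and Theorem \ref{prop. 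Resolvent estimates}, with $\epsilon$ depending only on the constant $C$ from that theorem.
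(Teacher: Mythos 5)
Your argument is exactly the paper's: apply Theorem~\ref{prop. Resolvent estimates} to $(H_0-\lambda)\psi=-V\psi$, then H\"older with $\frac{\sigma+1}{2\sigma}=\frac{1}{\sigma}+\frac{\sigma-1}{2\sigma}$, and divide by $\|\psi\|_{\ell^{2\sigma/(\sigma-1)}}$ to reach a contradiction for $C\|V\|_{\ell^\sigma}<1$. Your extra check that $\psi\in\ell^2\subset\ell^{2\sigma/(\sigma-1)}$ (so the division is legitimate) is a correct detail that the paper leaves implicit.
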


\subsection{Organization of the paper}

Section \ref{section Singularities of the phase function} contains the main body of the paper. After some preliminary reductions, we perform a case-by-case study of the singularity structure of the phase function in dimensions $d=2,3,4$. This yields the proof of Theorem \ref{theorem decay KG}.
Section 3 contains proofs of the PDE applications. Numerical solutions to a system of equations that appears in the proof of the decay estimates are listed in an appendix.

\section{Singularities of the phase function}
\label{section Singularities of the phase function}

In this section we consider the oscillatory integrals \eqref{oscillatory integrals KG velocity}. To conform with standard notation we use (in this section only) the parameters $(\lambda,s)$ instead of $(t,v)$ and the integration variable $x$ instead of $\xi$. That is, we consider 
\begin{equation}\label{int}
J(\lambda, s)=\int_{[0, 2\pi]^d} e^{i\lambda \Phi(x,s)}dx,
\end{equation}
where
\begin{equation}
\Phi(x,s):= \omega(x)-sx,\quad \omega(x):=\sqrt{1+2d-2\sum_{j=1}^{d}\cos(x_j)}.
\end{equation}
Here and in the following $sx$ means $s\cdot x$.
Suppose $s=s^0$ is a fixed vector and $\Phi(x, s^0)$ has a critical point $x^0$ (perhaps non-unique, but we consider one of them). 

Note that $\omega(x)\geq 1$ for all $x\in [0,2\pi]$.
From now on we use the abbreviations $c_j:=\cos(x_j^0), s_j:=\sin(x_j^0)$ for $j=1,\dots, d$. 
Consider the functions 
\begin{align}
\phi_1(x)&:=\Phi(x, s^0)-\Phi(x^0, s^0),\label{phi1}\\
\phi_2(x)&:=\omega(x) +s^0 x+\Phi(x^0, s^0)\label{phi2}.
\end{align}
Obviously, $\phi_1(x^0)=0$ and $\nabla\phi_1(x^0)=0$. 
Moreover, $\phi_2(x^0)\neq 0$; otherwise, we would have $0=\phi_1(x^0)+\phi_2(x^0)=2\omega(x^0)\geq 2$. Instead of $\Phi$ we will consider the new function 
\begin{align}\label{phi}
\phi(x):=\phi_1(x)\phi_2(x)
\end{align}
and investigate the type of singularities of the critical point $x^0$. By the properties of $\phi_1$ and $\phi_2$ just mentioned, the singularity type (for the so-called weighted homogeneous cases) of the functions $\Phi$ and $\phi$ at $x^0$ is the same (and the critical value is zero for the latter). But, as we will see, singularities of $\phi$ are easier to study since we can avoid radicals.
\jc{The critical point equation yields $s^0=\nabla\omega(x^0)$. In order to avoid possible confusion between $s_j$ and the components of $s^0$ we will not use the latter notation any more.}

We introduce the new variables by $\eta=x-x^0$ and, by using \eqref{phi1}--\eqref{phi}, we have
\begin{equation}\label{phas}
\phi(\eta)=1+2d-2\sum_{j=1}^{d}\cos(x_j^0+\eta_j)-\frac{(\sum_{j=1}^{d}s_j\eta_j +1+2d-2\sum_{j=1}^{d}c_j)^2}{1+2d-2\sum_{j=1}^{d}c_j}.
\end{equation} 

For the Hessian matrix we have 
\begin{align}\label{Hessian}
(\Hphi(0))_{kj}:=\partial_k\partial_j\phi(0):=2\left(c_j\delta_{kj}-\frac{s_ks_j}{1+2d-2\sum_{l=1}^{d}c_l}\right),
\end{align}
where $\delta_{kj}$ is the Kronecker ``delta", i.e.\ $\delta_{kj}=1$ whenever $k=j$ and otherwise $\delta_{kj}=0$. 
Moreover, for the determinant of Hessian matrix the following relation 
\begin{align}\label{dethes}
\det \Hphi(0)=2^d\prod_{k=1}^{d}c_k\left(1-\sum_{j=1}^d \frac{s_j^2}{c_j(1+2d-2\sum_{l=1}^dc_l)}\right)
\end{align}
holds true, whenever $c_j\neq0, j=1,\dots d$. Note that $\det \Hphi(0)$ is a rational function of $\{c_j\}_{j=1}^d$ because $s_j^2=1-c_j^2$.
%\query{Isroil: Why is this polynomial? It is  the rational function  as the function of $c$ because $s_j^2=1-c_j^2$. Moreover,
%denominator is bounded from below by a positive constant.}
A standard limiting argument then yields 
\begin{align}\label{detheslim}
\det \Hphi(0)=2^d\left(\prod_{k=1}^{d}c_k-\sum_{j=1}^d \frac{s_j^2\prod_{m\neq j} c_m}{(1+2d-2\sum_{l=1}^dc_l)}\right)
\end{align}
for any $c_j$, not only for $c_j\neq0$. The following observation about \eqref{Hessian} will be useful later: $ \Hphi(0)$ is a rank one perturbation of a diagonal matrix.
By rank subadditivity we then have the following result.

\begin{lemma}\label{rank}
Let $D=2\diag(c_1,\ldots,c_d)$ and $S_{kj}=2s_ks_j/\omega(x^0)^{\jc{2}}$. Then
$\Hphi(0)=D-S$ and 
\begin{align}
\rank(D)-1\leq \rank(\Hphi(0))\leq \rank(D)+1.
\end{align}
\end{lemma}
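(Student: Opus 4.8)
The plan is to recognize $S$ as a rank-one matrix and then apply subadditivity of the rank.

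First I would verify the decomposition $\Hphi(0)=D-S$ directly from \eqref{Hessian}. Writing $v:=(s_1,\ldots,s_d)^{\top}\in\R^d$ and using that $1+2d-2\sum_{l=1}^d c_l=\omega(x^0)^2$ by the very definition of $\omega$ evaluated at $x^0$, equation \eqref{Hessian} reads
\[
(\Hphi(0))_{kj}=2c_j\delta_{kj}-\frac{2s_ks_j}{\omega(x^0)^2}=D_{kj}-S_{kj},\qquad S=\frac{2}{\omega(x^0)^2}\,v v^{\top}.
\]
Thus $S$ is a scalar multiple of the outer product $vv^{\top}$, so $\rank(S)\le 1$ (indeed $\rank(S)=1$ unless $s_1=\cdots=s_d=0$).

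Then I would invoke the elementary two-sided rank inequality $\lvert\rank(A)-\rank(B)\rvert\le\rank(A+B)\le\rank(A)+\rank(B)$, valid for any pair of matrices of the same size: the upper bound holds because the column space of $A+B$ lies in the sum of those of $A$ and $B$, and the lower bound follows by applying the upper bound to the splitting $A=(A+B)+(-B)$. Specializing to $A=D$, $B=-S$ and inserting $\rank(S)\le 1$ gives
\[
\rank(D)-1\le\rank(D)-\rank(S)\le\rank(D-S)=\rank(\Hphi(0))\le\rank(D)+\rank(S)\le\rank(D)+1,
\]
which is exactly the assertion.

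There is essentially no obstacle here: the only point needing a line of care is the bookkeeping that the denominator $1+2d-2\sum_l c_l$ in \eqref{Hessian} coincides with $\omega(x^0)^2$, so that the matrix $S$ in the statement is genuinely the symmetric rank-one matrix $\tfrac{2}{\omega(x^0)^2}v v^{\top}$; the remainder is the standard rank-subadditivity estimate.
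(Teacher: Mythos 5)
Your proposal is correct and matches the paper's argument exactly: the paper notes that $\Hphi(0)$ is a rank-one perturbation of a diagonal matrix and invokes rank subadditivity, which is precisely the two-sided rank inequality you use. You have simply spelled out what the paper leaves as a one-line remark.
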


In the following, we will consider each of the cases $d=2,3,4$ separately. In view of \eqref{rank} we distinguish each case into $d+1$ sub-cases according to how many of the $c_j$ are zero. 

We will need the following notion of homogeneity.
\begin{definition}
Let $\kappa=(\kappa_1,\ldots,\kappa_d)$ with $0<\kappa_j\leq 1$ for all $j=1,\ldots,d$. We say that a function $\phi:\R^d\to\R$ is homogeneous of degree $r\geq 0$ with respect to the weight $\kappa$ if for any $\lambda>0$,
\begin{align*}
\phi(\lambda^{\kappa_1}x_1,\ldots,\lambda^{\kappa_d}x_d)=\lambda^r \phi(x_1,\ldots,x_d).
\end{align*}
\end{definition}
For a given weight $\kappa$ and degree $r$ we will indicate terms of degree $>r$ by $\ldots$. If $\kappa$ and $r$ are clear from the context, we will not comment this further. For example, in \eqref{Taylor phi d=2} the meaning of ``$\ldots$" is the standard one ($\kappa_j=1$, $r=5$). We will usually normalize $\kappa$ such that $r=1$ (e.g.\ $\kappa_j=1/5$, $r=1$ in the previous example). 

If $\phi$ is analytic (as will be the case here), then it has an expansion in $\kappa$-homogeneous polynomials of increasing degrees. The polynomial with the lowest degree will be called the ``principal part", $\phi_{\rm pr}$.

\subsection{Two dimensions}

If $d=2$, then \eqref{Hessian} can be written as
$$
\det \Hphi(0)=4\left(c_1c_2-\frac{c_1s_2^2+c_2s_1^2}{5-2(c_1+c_2)}\right).
$$
\textbf{Case 1:} If $c_1=0, c_2=0$ then $s_j=\pm1$. Note that $s_j^3=s_j$, so we have 
\begin{equation}\label{Taylor phi d=2}
\phi(\eta)=-\frac15(s_1\eta_1+s_2\eta_2)^2-\frac2{3!}((s_1\eta_1)^3+(s_2\eta_2)^3)+\frac2{5!}((s_1\eta_1)^5+(s_2\eta_2)^5)+\dots.
\end{equation} 
%\todo{Isroil/Orif: Please check the signs in red. The signs in red are correct.}
We will show that the function $\phi$ has an $A_3$ type singularity at the point $\eta=0$.
Indeed, by using the change of variables $y_1= s_1\eta_1+s_2\eta_2, y_2=s_2\eta_2$ we can see that 
\begin{equation}
\phi(\eta(y))=-\frac15y_1^2-\frac2{3!}y_1y_2^2 +\dots,
\end{equation} 
where ``$\ldots$" is a sum of homogeneous polynomials of degree strictly bigger than $r=1$ with respect to the weight $\kappa=(1/2, 1/4)$. Hence, the principal part of the function has the form
\begin{equation}
\phi_{pr}(y)=-\frac15y_1^2-y_1y_2^2.
\end{equation} 
Changing variables again, $u_1=(5/2)^{1/2}y_2$, $u_2=\frac{1}{5}y_1+\frac{5}{2}y_2^2$, we can write this as
\begin{align*}
\phi_{pr}(u)=u_1^4-u_2^2.
\end{align*}
From Table \ref{table:normal forms} we infer (dropping the quadratic part, as we may) that this is an $A_3$ type singularity.

\textbf{Case 2:} Assume that exactly one of the $c_j$ is zero. Without loss of generality we assume that $c_1=0$ and $c_2\neq0$. Then since $s_1=\pm1$, it is easy to see that $\det \Hphi(0)\neq0$. 
Hence the function $\phi$ has a non-degenerate critical point at the origin, i.e.\ an $A_1$ type singularity. 
 
\textbf{Case 3:} If $c_1\neq0$ and $c_2\neq0$ and $\det \Hphi(0)=0$, then the condition $\det \Hphi(0)=0$ can be written as (using $s_j^2=1-c_j^2$)
\begin{align}\label{det Hphi=0 case 3 in d=2}
5-(c_1+c_2)-(\frac1{c_1}+\frac1{c_2})=0.
\end{align}  
\textbf{Case 3a:} If $c_1-\frac{s_1^2}{5-2(c_1+c_2)}=0$, then the condition $\det \Hphi(0)=0$ yields $s_2=0$, so $c_2=\pm 1$. We will argue that $\phi$ has an $A_2$ type singularity at the origin $\eta=0$. Indeed, we have
\begin{equation}
\phi(\eta)=c_2\eta_2^2-\frac2{3!}s_1\eta_1^3+ \dots,
\end{equation} 
%\query{Isroil/Orif: Please check the sign.The sign is correct.}
where $s_1\neq0$ (otherwise we would have $c_1=\pm 1$ and hence $\det \Hphi(0)\neq0$), and $``\dots"$ means sum of homogeneous polynomials of degree $>1$ with respect to the weight $(1/3, 1/2)$. Consequently, the principal part of the function $\phi$ has the form
\begin{equation}
\phi_{pr}=c_2\eta_2^2-\frac2{3!}s_1\eta_1^3.
\end{equation} 
Therefore, the phase function has an $A_2$ type singularity.

\textbf{Case 3b:} The case $c_2-\frac{s_{2}^2}{5-2(c_1+c_2)}=0$ (and $\det \Hphi(0)=0$) is analogous.

\textbf{Case 3c:} Lastly, assume that $c_j\delta_{jk}-\frac{s_js_k}{5-2(c_1+c_2)}\neq0$ for $j,k=1,2$ (this covers all remaining cases). 
Then under the condition $\det \Hphi(0)=0$ (or equivalently \eqref{det Hphi=0 case 3 in d=2}) the function $\phi$ can be written as 
\begin{equation}
\phi(\eta)=\left(c_1-\frac{s_1^2}{5-2(c_1+c_2)}\right)\left(\eta_1-\frac{s_1s_2\eta_2}{5c_1-2c_1c_2-c_1^2-1}\right)^2-\frac{2}{3!}(s_1\eta_1^3+s_2\eta_2^3)+\dots
\end{equation}
We claim that $\phi$ has only an $A_k$ type singularity with $k\leq 2$. Indeed, a straightforward calculation shows that $\phi$ has $A_k$ type singularities with $k\ge3$ if and only if 
\begin{equation}\label{eq1}
\frac{s_1^4s_2^3}{(5c_1-2c_1c_2-c_1^2-1)^3}+s_2=0
\end{equation}
and 
\begin{equation}\label{eq2} 
5-(c_1+c_2)-(\frac1{c_1}+\frac1{c_2})=0.
\end{equation}
Since $s_2\neq0$, and $s_1\neq0$ the \eqref{eq1} under the condition \eqref{eq2} can be written as 
\begin{equation}
c_2^3s_1^4+c_1^3s_2^4=0
\end{equation}
or equivalently
\begin{equation}\label{eq3}
(1-c_1^2)^2c_2^3+(1-c_2^2)^2c_1^3=0.
\end{equation}
We claim that the system of equations \eqref{eq2} and \eqref{eq3} has no solution satisfying $|c_j|<1, j=1,2$. Indeed, if the pair $(c_1, c_2)$ is a solution to that system, then $c_1$ and $c_2$ have opposite signs. Without loss of generality, assume $1\ge c_1>0$ and $-1\le c_2<0$.

\begin{lemma}
Under the conditions $1\ge c_1>0$ and $-1\le c_2<0$, the system of equations \eqref{eq2}--\eqref{eq3} has no solutions. 
\end{lemma}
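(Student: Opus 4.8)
The plan is to exploit the sign information already recorded just before the statement: any solution in the prescribed region has $c_1>0>c_2$, so I would write $c_1=a$ and $c_2=-b$ with $a,b\in(0,1]$. The point of this substitution is that \eqref{eq3} becomes \emph{symmetric} in $a$ and $b$ through a single monotone one-variable function. Indeed, with $c_1=a$, $c_2=-b$, equation \eqref{eq3} reads $(1-b^2)^2a^3=(1-a^2)^2b^3$, which (away from the endpoint values) is
\begin{equation*}
g(a)=g(b),\qquad g(t):=\frac{t^3}{(1-t^2)^2}\quad(t\in(0,1)).
\end{equation*}

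First I would dispose of the endpoints. If $a=1$, the right-hand side of $(1-b^2)^2a^3=(1-a^2)^2b^3$ vanishes, forcing $(1-b^2)^2=0$, i.e. $b=1$; symmetrically $b=1$ forces $a=1$. In the remaining range $a,b\in(0,1)$ I would show that $g$ is strictly increasing, hence injective, and conclude $a=b$. Monotonicity is immediate since $g(t)=t\cdot\bigl(t/(1-t^2)\bigr)^2$ is a product of two positive, strictly increasing functions on $(0,1)$ (equivalently, $g'(t)=t^2(3+t^2)(1-t^2)^{-3}>0$). Thus every solution of \eqref{eq3} in the region $\{1\ge c_1>0,\ -1\le c_2<0\}$ satisfies $a=b$, i.e. $c_2=-c_1$.

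Finally I would substitute $c_2=-c_1$ into \eqref{eq2}: then $c_1+c_2=0$ and $\tfrac1{c_1}+\tfrac1{c_2}=0$, so \eqref{eq2} reduces to $5=0$, a contradiction. Hence the system \eqref{eq2}--\eqref{eq3} has no solution with $1\ge c_1>0$ and $-1\le c_2<0$, which is the claim.

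I do not expect a genuine obstacle here: the one substantive observation is that \eqref{eq3} collapses to $g(\lvert c_1\rvert)=g(\lvert c_2\rvert)$ for the monotone function $g$, after which \eqref{eq2} is violated trivially. The only care required is at the endpoints $c_j=\pm1$, where $g$ is undefined; these are handled directly as above.
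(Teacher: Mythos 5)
Your proof is correct, and it reaches the same pivot as the paper — that under the sign constraints, \eqref{eq3} forces $c_1+c_2=0$, which is incompatible with \eqref{eq2} because $c_1+c_2=0$ makes the left side of \eqref{eq2} equal to $5$ — but by a different decomposition. The paper factors the left side of \eqref{eq3} as
\begin{equation*}
(1-c_1^2)^2c_2^3+(1-c_2^2)^2c_1^3=(c_1+c_2)\bigl(c_1^2(1-c_2^2)+c_2^2(1-c_1^2)-c_1c_2(1-c_1^2c_2^2)\bigr)
\end{equation*}
and argues that the second factor is strictly positive when $c_1c_2<0$ and $0<|c_j|<1$. You instead set $c_1=a$, $c_2=-b$, rewrite \eqref{eq3} as $g(a)=g(b)$ for the strictly increasing function $g(t)=t^3/(1-t^2)^2$, and conclude $a=b$. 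Both are valid; the paper's polynomial factoring is pure algebra but requires spotting the $(c_1+c_2)$ factor, while your monotonicity argument is immediate once the sign-splitting substitution is made. A small bonus of your version is the explicit treatment of the endpoints $c_1=1$, $c_2=-1$: the paper's positivity statement is worded with $0<|c_j|<1$, which is justified by context (Case~3 has $s_j\neq0$) but is not literally what the lemma allows; your endpoint check closes that gap directly.
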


\begin{proof}  
Assume that \eqref{eq2} holds. The left hand side of \eqref{eq3} can be written as
\begin{align*}
\mbox{LHS of \eqref{eq3}}&=(1-c_1^2)^2c_2^3+(1-c_2^2)^2c_1^3
=c_1^3+c_2^3-2(c_1^2c_2^3+c_1^3c_2^2)+c_1^4c_2^3+c_1^3c_2^4\\
&=(c_1+c_2)(c_1^2-c_1c_2+c_2^2-2c_1^2c_2^2+c_1^3c_2^3)
\end{align*}
Since the signatures of $c_1$ and $c_2$ are different and since $0<|c_j|<1$, we have
\begin{align*}
c_1^2-c_1c_2+c_2^2-2c_1^2c_2^2+c_1^3c_2^3=
c_1^2(1-c_2^2)+c_2^2(1-c_1^2)-c_1c_2(1-c_1^2c_2^2)>0.
\end{align*}
Moreover, since $c_1+c_2\neq 0$ by \eqref{eq2}, it follows that \eqref{eq3} cannot hold.
\end{proof}

In summary, we have proved the following.

\begin{proposition}\label{d2case}
If $d=2$, then the phase function $\phi$ has only singularities of type $A_k$ with $k\le3$. More, precisely, If $c_1=c_2=0$ then $\phi$ has $A_3$ type singularity; if $c_1\neq0$ or $c_2\not=0$ then $\phi$ has $A_k$ type singularities with $k\le2$. For the corresponding oscillatory integral $J(\lambda, s)$ the estimate 
\begin{equation}
|J(\lambda, s)|\le C(1+|\lambda|)^{-\frac{3}{4}}
\end{equation}
holds, where the $C$ constant does not depend on $\lambda, s$.
\end{proposition}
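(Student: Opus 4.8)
The plan is to deduce both the classification and the decay bound from the case analysis carried out above, by localizing the oscillatory integral and then invoking Duistermaat's estimates for oscillatory integrals attached to $A_k$ singularities. First I would introduce a smooth partition of unity on $\T^2$ subordinate to a finite cover. On any patch not meeting the set of critical points of $\Phi(\cdot,s)$, repeated integration by parts gives a contribution of size $O_N((1+|\lambda|)^{-N})$ for every $N$, uniformly in $s$ (the gradient of the phase being bounded away from zero on the closure of the patch, uniformly in $s$ by compactness). Since $|\nabla\omega|\le 1$, there are no critical points at all when $|s|>1$, so $J(\lambda,s)$ is rapidly decaying there, and it suffices to treat $|s|\le 1$. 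Thus the estimate reduces to bounding the finitely many local contributions of patches centered at critical points $x^0$ of $\Phi(\cdot,s)$.

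For each such $x^0$ I would appeal to the case distinction according to how many of the $c_j$ vanish. If $c_1=c_2=0$ (Case 1), the computed principal part $u_1^4-u_2^2$ shows that $\phi$ has an $A_3$ singularity. In every remaining situation, either $\det\Hphi(0)\ne 0$ and the critical point is non-degenerate ($A_1$), or we are in the degenerate sub-cases 3a, 3b, 3c, where $\phi$ has an $A_k$ singularity with $k\le 2$; the exclusion of $A_k$ with $k\ge 3$ in the generic corank-one Case 3c is exactly the content of the preceding Lemma, which shows the system \eqref{eq2}--\eqref{eq3} has no solution with $|c_j|<1$. Since all the principal parts encountered are weighted homogeneous, the singularity types of $\phi$ and of $\Phi(\cdot,s)$ at $x^0$ agree, so $\Phi(\cdot,s)$ has only $A_k$ singularities with $k\le 3$, with $k=3$ occurring solely at the four points $x^0\equiv(\pm\pi/2,\pm\pi/2)$.

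It then remains to convert the singularity type into a decay rate. By Table \ref{table:normal forms} the singular index of an $A_k$ singularity in dimension $d=2$ is $\tfrac{k-1}{2k+2}$, so the local contribution of a patch around an $A_k$ critical point is $O((1+|\lambda|)^{-\sigma_k})$ with $\sigma_k=1-\tfrac{k-1}{2k+2}$; hence $\sigma_1=1$, $\sigma_2=\tfrac{5}{6}$, $\sigma_3=\tfrac{3}{4}$, and the slowest rate among the types that actually occur is $\tfrac{3}{4}$. To make each local estimate uniform in $s$ --- the delicate point being that as $s$ varies the critical points move and may collide --- I would note that near $x^0$ the family $\{\Phi(\cdot,s)\}_s$ is a deformation of the relevant $A_k$ normal form inside its versal unfolding (of one, respectively two, parameters for $A_2$, $A_3$), and apply Duistermaat's parameter-uniform bounds \cite{MR405513}: for $A_2$ the Airy estimate $|\lambda|^{-1/3}$, for $A_3$ the Pearcey estimate $|\lambda|^{-1/4}$, in each case multiplied by $|\lambda|^{-1/2}$ from stationary phase in the transversal variable. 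Covering the compact set $\{(x,s):|s|\le 1,\ \nabla_x\Phi(x,s)=0\}$ by finitely many such neighborhoods, adding the rapidly decaying non-stationary contributions, and using the trivial bound $|J(\lambda,s)|\le C$ for $|\lambda|\le 1$, yields $|J(\lambda,s)|\le C(1+|\lambda|)^{-3/4}$ with $C$ independent of $\lambda$ and $s$.

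The main obstacle is the uniformity in the parameter $s$: a single normal form will not suffice, since critical points can merge as $s$ moves, and one genuinely needs the stability of $A_k$ singularities for $k\le d+1=3$ together with Duistermaat's parameter-uniform stationary-phase estimates. The other substantive input --- ruling out $A_k$ with $k\ge 3$ in Case 3c --- has already been reduced to the algebraic Lemma above, and everything else is bookkeeping over the finitely many patches.
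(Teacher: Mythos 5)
Your proposal follows essentially the same route as the paper: the case-by-case classification by the number of vanishing $c_j$ (with the algebraic Lemma disposing of Case 3c), followed by invoking Duistermaat's parameter-uniform estimates for $A_k$ singularities and reading off the decay rates from the singular-index table. The only difference is cosmetic — you spell out the localization by partition of unity and the reduction to $|s|\le 1$, which the paper leaves implicit after the classification — but the key inputs (the algebraic Lemma excluding $A_{\ge 3}$ in Case 3c, stability of $A_k$ for $k\le 3$ in two parameters, and Duistermaat's uniform bounds \cite{MR405513}) coincide.
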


\begin{remark}
The following comment appears in \cite{MR2150357} (page no. 1853) : ``Numerical simulations seem to confirm the validity
of (17) since in a two-dimensional numerical experiment for a DKG lattice, the best fit to the
decay was found to be $O(t^{-0.675})$. Note that (17) in \cite{MR2150357} is their conjectured bound $|J(\lambda, s)|\le C(1+|\lambda|)^{-\frac{d}{3}}$ (for $d=2$). Proposition \ref{d2case} actually shows that the decay rate is better than the numerical bound.
We speculate that this is related to the ``constant problem". As mentioned in the introduction, the bound of Proposition \ref{d2case} was already proved by Borovyk--Goldberg \cite{MR3650321}, without reference to the conjecture of Kevrekidis and Stefanov \cite{MR2150357}.
\end{remark}
 
\subsection{Three dimensions}
If $d=3$, then \eqref{Hessian} becomes
\begin{align}
\det \Hphi(0)=8c_1c_2c_3-\frac{8(c_1c_2s_3^2+c_1c_3s_2^2+c_2c_3s_1^2)}{1+2d-2(c_1+c_2+c_3)}.
\end{align}

\textbf{Case 1:} If $c_1=c_2=c_3=0$, then we have $s_j=\pm 1$ and 
\begin{equation}
\phi(\eta)=-\frac{(s_1\eta_1+s_2\eta_2+s_3\eta_3)^2}{7}-\frac{2}{3!}((s_1\eta_1)^3+(s_2\eta_2)^3+(s_3\eta_3)^3)+\dots
\end{equation}
%\todo{Isroil/Orif: Please check the signs in red. It is correct.}
Change of variables $y_1=s_1\eta_1+s_2\eta_2+s_3\eta_3, y_2=s_2\eta_2, y_3=s_3\eta_3$ yields
\begin{equation}
\phi(\eta(y))=-\frac{y_1^2}{7}-\frac{2}{3!}(y_1^3-3y_1^2(y_2+y_3)+3y_1(y_2+y_3)^2)-3y_2y_3(y_2+y_3))+\dots
\end{equation}
The principal part of $\phi$ with respect to the weight $(1/2, 1/3, 1/3)$ has the form
\begin{equation}
\phi_{pr}(y)=-\frac{y_1^2}{7}+y_2y_3(y_2+y_3).
\end{equation}
By a change of variables this can be reduced to 
$\phi_{pr}(z)=-z_1^2+z_2^2z_3-z_3^3$. Table~\ref{table:normal forms} tells us that this is a $D_4^-$ type singularity.

\textbf{Case 2:} Assume that exactly one of the $c_j$ is zero. Without loss of generality, we suppose that $c_1=0$ and $c_2\neq0, c_3\neq0$. Then 
\begin{align}
\det \Hphi(0)=-\frac{8 c_2c_3}{1+2d-2(c_2+c_3)}\neq0,
\end{align}
hence we have a non-degenerate critical point at $\eta=0$ (an $A_1$ singularity).

\textbf{Case 3:} Assume that exactly two of the $c_j$ are zero; without loss of generality, $c_1=0=c_2$ and $c_3\neq0$. Then $s_1=\pm1$ and $s_2=\pm1$. Hence, 
\begin{align}
\phi(\eta)&=c_3\eta_3^2-\frac{(s_1\eta_1+s_2\eta_2+s_3\eta_3)^2}{7-2c_3}-\frac{2}{3!}((s_1\eta_1)^3+(s_2\eta_2)^3+s_3\eta_3^3)-\frac2{4!}c_3\eta_3^4
\\&+\frac2{5!} ((s_1\eta_1)^5+(s_2\eta_2)^5+s_3\eta_3^5)+ \dots.
\end{align}
We change variables $y_1=s_1\eta_1+s_2\eta_2+s_3\eta_3, y_2=s_2\eta_2, y_3=\eta_3$ to find
\begin{align}
&\phi(\eta(y))=c_3y_3^2-\frac{y_1^2}{7-2c_3}-\frac{2}{3!}((y_1-y_2-s_3y_3)^3+y_2^3+s_3y_3^3)-\frac2{4!}c_3y_3^4\\
&+\frac2{5!} ((y_1-y_2-s_3y_3)^5+y_2^5+s_3y_3^5)+ \dots\\
&=c_3y_3^2-\frac{y_1^2}{7-2c_3}-\frac{2}{3!}((y_1-s_3y_3)^3-3(y_1-s_3y_3)^2y_2+3(y_1-s_3y_3)y_2^2+s_3y_3^3)\\
&-\frac2{4!}c_3y_3^4+\frac2{5!} ((y_1-y_2-s_3y_3)^5+y_2^5+s_3y_3^5)+ \dots.
\end{align}
The principal part with respect to the weight $(1/2, 1/4, 1/2)$ is given by
\begin{align}
\phi_{pr}(\eta(y))=c_3y_3^2-\frac{y_1^2}{7-2c_3}-\frac{2}{3!}3(y_1-s_3y_3)y_2^2=c_3y_3^2-\frac{y_1^2}{7-2c_3}-(y_1-s_3y_3)y_2^2.
\end{align}
We can rewrite $\phi(\eta(y))$ as
\begin{align}
\phi(\eta(y))&=c_3(y_3+\frac{s_3y_2^2}{2c_3})^2-\frac1{7-2c_3}(y_1+\frac{(7-2c_3)y_2^2}2)^2- \frac2{4!}c_3y_3^4\\
&-\frac{2}{3!}((y_1-s_3y_3)^3-3(y_1-s_3y_3)^2y_2+s_3y_3^3)\\
&- (\frac{s_3^2}{4c_3}- \frac{7-2c_3}4)y_2^4
+\frac2{5!} ((y_1-y_2-s_3y_3)^5+y_2^5+s_3y_3^5)+ \dots.
\end{align}
From this we see that if $s_3^2+2c_3^2-7c_3\equiv 1+c_3^2-7c_3\neq0$, then we have an $A_3$ type singularity at $y=\eta=0$. On the other hand, if 
$1+c_3^2-7c_3=0$ (i.e.\ $c_3=2/(7+3\sqrt{5})$), then we have
\begin{align}
\phi(\eta(y))&=c_3(y_3+\frac{s_3y_2^2}{2c_3})^2-\frac1{7-2c_3}(y_1+\frac{(7-2c_3)y_2^2}2)^2\\
&- \frac2{4!}c_3y_3^4
+\frac2{5!} (5(y_1-s_3y_3)y_2^4+\dots.
\end{align}
Note that the Taylor series of the function $\phi(\eta(y))$ has no term $cy_2^5$ with non-zero coefficient $c$.
By using the change of variables 
\begin{equation}
z_2=y_2, z_1=y_1+\frac{(7-2c_3)y_2^2}2, z_3=y_3+\frac{s_3y_2^2}{2c_3},
\end{equation}
we obtain (under the condition $1+c_3^2-7c_3=0$ we have $y_1-s_3y_3=z_1-s_3z_3$) 
\begin{align}
\phi(\eta(y(z)))=c_3z_3^2-\frac1{7-2c_3}z_1^2+\frac2{5!} (5(z_1-s_3z_3)z_2^4+\frac{s_3^4}{24c_3^3}z_2^6)+\dots,
\end{align}
where $``\dots" $ means sum of homogeneous polynomials of degree $>1$ with respect to the weight $(1/2, 1/6, 1/2)$. Note that the degree of the polynomial $(z_1-s_3z_3)z_2^4$ is $7/6>1$ with respect to that weight. Hence, the principal part is 
\begin{equation}
\phi_{pr}(z)=c_3z_3^2-\frac1{7-2c_3}z_1^2+\frac{s_3^4}{24c_3^3}z_2^6.
\end{equation} 
This is an $A_5$ type singularity at $z=y=\eta=0$, because if $1+c_3^2-7c_3=0$ (as we assumed), then $s_3\neq0$.

\textbf{Case 4:} Finally, assume $c_1\neq0, c_2\neq0, c_3\neq0$ and $\det \Hphi(0)=0$. This will be the longest and most delicate case. We claim that the function has an $A_k$ type singularity with $k\leq 3$. Indeed, due to Lemma \ref{rank}, the rank of the matrix $\Hphi(0)$ is at least $2$. Since we are assuming
$\det \Hphi(0)=0$, the rank of that Hessian matrix is exactly $2$.
 
\textbf{Case 4a:} Suppose that one of the $s_j$ vanishes. Without loss of generality, we will assume that $s_1=0$. Then $c_1=\pm 1$. Since $c_2\neq0, c_3\neq0$, arguing as in case 3a in $d=2$, we see that $\phi$ has an $A_2$ type singularity (we exclude the $A_1$ case since we assume $\det \Hphi(0)=0$).   

\textbf{Case 4b:} Further, assume $c_j\neq0, s_j\neq0, j=1,2,3$. 
Then for some $j$ we have $c_j-\frac{s_j^2}{7-2(c_1+c_2+c_3)}\neq0$; otherwise we would have 
\begin{equation}
det \Hphi(0)=\frac{8s_1^2s_2^2s_3^2}{(7-2(c_1+c_2+c_3))^3}\neq0,
\end{equation}
i.e.\ $\phi$ would have  an $A_1$ type (or non-degenerate) critical point at $\eta=0$. Without loss of generality, we will assume that $c_1-\frac{s_1^2}{7-2(c_1+c_2+c_3)}\neq0$.
The quadratic part of $\phi$ is given by
\begin{equation}
p_2(\eta):=  c_1\eta_1^2+c_2\eta_2^2+c_3\eta_3^2-\frac{(s_1\eta_1+s_2\eta_2+s_3\eta_3)^2}{7-2(c_1+c_2+c_3)}.
\end{equation}
For the kernel $\nabla p_2(\eta)=0$ we have the relation
\begin{equation}
\eta_1=\frac{s_1c_3}{c_1s_3}\eta_3, \quad  \eta_2=\frac{s_2c_3}{c_2s_3}\eta_3.
\end{equation}
\jc{Moreover, using the identity 
\begin{equation}
-\frac{(s_2\eta_2+s_3\eta_3)^2}{\frac{s_2^2}{c_2}+\frac{s_3^2}{c_3}}+c_2\eta_2^2+c_3\eta_3^2= 
\frac{c_2^2s_3^2\eta_2^2+c_3^2s_2^2\eta_3^2-2c_2c_3s_2s_3\eta_2\eta_3}{c_3s_2^2+c_2s_3^2},
\end{equation}
we find that
\begin{align}
&p_2(\eta)-\left(c_1-\frac{s_1^2}{7-2(c_1+c_2+c_3)}\right) \left(\eta_1 -\frac{s_1(s_2\eta_2+s_3\eta_3)}{c_1(7-2(c_1+c_2+c_3))-s_1^2}\right)^2\\ 
&=-\frac{(s_2\eta_2+s_3\eta_3)^2}{\frac{s_2^2}{c_2}+\frac{s_3^2}{c_3}}+c_2\eta_2^2+c_3\eta_3^2
=\frac{(c_2s_3\eta_2-c_3s_2\eta_3)^2}{c_3s_2^2+c_2s_3^2}.
\end{align}
}
Since $c_1-\frac{s_1^2}{7-2(c_1+c_2+c_3)}\neq0$ the function $\phi$ can be written as
\begin{equation}
\phi(\eta)=r_1(\eta_1+b_2\eta_2+b_3\eta_3)^2+r_2(\eta_2-\frac{s_2c_3}{c_2s_3}\eta_3)^2-\frac{2}{3!}(s_1\eta_1^3+s_2\eta_2^3+s_3\eta_3^3)+\dots,
\end{equation}
where $r_1, r_2, b_2, b_3$ are nonzero real numbers satisfying the conditions:  
\begin{equation}
r_1=c_1-\frac{s_1^2}{7-2(c_1+c_2+c_3)},\quad b_2\frac{s_2c_3}{c_2s_3}+b_3=-\frac{s_1c_3}{c_1s_3}.
\end{equation}
\textbf{Case 4bi:} Assume that 
\begin{equation}\label{case 4bi d=4}
7-\sum_{j=1}^3(c_j+1/c_j)=0\quad \mbox{and} \quad \sum_{j=1}^3\frac{s_j^4}{c_j^3}\neq0.
\end{equation}
We claim that $\phi$ has an $A_2$ type singularity at $\eta=0$.
Indeed, the first condition in \eqref{case 4bi d=4} is equivalent to $\det \Hphi(0)=0$. Under the second condition we can use change of variables 
\begin{equation}
y_1=\eta_1+b_2\eta_2+b_3\eta_3,\quad y_2=\eta_2-\frac{s_2c_3}{c_2s_3}\eta_3,\quad y_3=\eta_3
\end{equation}
to get
\begin{equation}
\phi(\eta)=r_1y_1^2+r_2y_2^2-\frac{2}{3!}\jc{\frac{c_3^3}{s_3^3}}(\sum_{j=1}^3\frac{s_j^4}{c_j^3})y_3^3+\dots,
\end{equation}
where $``\dots"$ means sum of homogeneous polynomials of degree $>1$ with respect to the weight $(1/2,1/2, 1/3)$. Hence, $\phi$ has an $A_2$ type singularity at $\eta=0$.    

\textbf{Case 4bii:} Assume 
\begin{equation}\label{case 4bii d=4}
7-\sum_{j=1}^3(c_j+1/c_j)=0\quad \mbox{and} \quad \sum_{j=1}^3\frac{s_j^4}{c_j^3}=0,
\end{equation}
and recall that the first condition simply means $\det \Hphi(0)=0$.
We then have the following relation,
\begin{equation}
7-2(c_1+c_2+c_3)-\frac{s_1^2}{c_1}=\left(\frac{s_2^2}{c_2}+\frac{s_3^2}{c_3}\right).
\end{equation}

Changing variables
\begin{equation}
y_1=\eta_1 -\frac{s_1(s_2\eta_2+s_3\eta_3)}{c_1(7-2(c_1+c_2+c_3))-s_1^2}, y_2=\eta_2- \frac{c_3s_2}{c_2s_3}\eta_3, y_3=s_2\eta_2+s_3\eta_3,
\end{equation}
we arrive at
\begin{align}
p_2(\eta(y))&=  \left(c_1-\frac{s_1^2}{7-2(c_1+c_2+c_3)}\right) y_1^2+ \frac{c_2^2s_3^2y_2^2}{c_3s_2^2+c_2s_3^2}\\
&=\frac{c_1}{7-2(c_1+c_2+c_3)}\left(\frac{s_2^2}{c_2}+\frac{s_3^2}{c_3} \right)y_1^2+\frac{c_2^2s_3^2y_2^2}{c_3s_2^2+c_2s_3^2}\\ &=\frac{c_1(c_3s_2^2+c_2s_3^2)}{c_2c_3(7-2(c_1+c_2+c_3))}y_1^2+\frac{c_2^2s_3^2y_2^2}{c_3s_2^2+c_2s_3^2}.
\end{align}
Under the conditions \eqref{case 4bii d=4} we can write
\begin{align}
\eta_1&=y_1 +\frac{s_1y_3}{c_1(7-2(c_1+c_2+c_3))-s_1^2},\\
\eta_2&=\frac{\frac{s_3^2y_2}{c_3}}{\frac{s_2^2}{c_2}+\frac{s_3^2}{c_3}}+\frac{\frac{s_2y_3}{c_2}}{\frac{s_2^2}{c_2}+\frac{s_3^2}{c_3}},\\
\eta_3&=-\frac{s_2y_2}{\frac{c_3}{s_3}(\frac{s_2^2}{c_2}+\frac{s_3^2}{c_3})}+\frac{y_3}{\frac{c_3}{s_3}(\frac{s_2^2}{c_2}+\frac{s_3^2}{c_3})},
\end{align}
or equivalently,
\begin{align}
\eta_1&=y_1 +\frac{\frac{c_2c_3s_1}{c_1}y_3}{c_3s_2^2+c_2s_3^2}, \\
\eta_2&=\frac{c_2s_3^2y_2}{c_3s_2^2+c_2s_3^2}+\frac{c_3s_2y_3}{c_3s_2^2+c_2s_3^2},\\
 \eta_3&=-\frac{s_3s_2c_2y_2}{c_3s_2^2+c_2s_3^2}+\frac{c_2s_3y_3}{c_3s_2^2+c_2s_3^2}.
\end{align}
A straightforward but tedious computation yields
\begin{align}
s_1\eta_1^3+s_2\eta_2^3+s_3\eta_3^3&=s_1\left(y_1 +\frac{\frac{c_2c_3s_1}{c_1}y_3}{c_3s_2^2+c_2s_3^2}\right)^3+
s_2\left( \frac{c_2s_3^2y_2}{c_3s_2^2+c_2s_3^2}+\frac{c_3s_2y_3}{c_3s_2^2+c_2s_3^2}\right)^3\\
&+s_3\left(-\frac{s_3s_2c_2y_2}{c_3s_2^2+c_2s_3^2}+\frac{c_2s_3y_3}{c_3s_2^2+c_2s_3^2}\right)^3\\
&=\frac{3s_1^3c_2^2c_3^2y_1y_3^2}{c_1^2(c_3s_2^2+c_2s_3^2)^2}+\frac{3y_2y_3^2c_2s_2s_3^2(c_3^2s_2^2-c_2^2s_3^2)}{(c_3s_2^2+c_2s_3^2)^3}+\dots \\
&=\frac{3s_1^3c_2^2c_3^2y_1y_3^2}{c_1^2(c_3s_2^2+c_2s_3^2)^2}+\frac{3y_2y_3^2c_2s_2s_3^2(c_3^2-c_2^2)}{(c_3s_2^2+c_2s_3^2)^3}+\dots,
\end{align}
where $``\dots"$ comprises a sum of terms with degree $>1$ with respect to the weight $(1/2, 1/2, 1/4)$. Under the condition 
$$\frac{s_1^4}{c_1^3} +\frac{s_2^4}{c_2^3}+\frac{s_3^4}{c_3^3}=0,$$ there is no term $cy_3^3$ with a non-zero coefficient $c$. 
Hence all other terms which are not indicated have degree $>1$ with respect to the weight $(1/2, 1/2, 1/4)$.
%\query{Isroil: Where is this used? I don't understand the proof of case 4bii. We should try to make it clearer.}
We claim that if the following condition (see \eqref{phipr case 4bii})
\begin{align}\label{condition 4bii d=4}
s_1^6c_2^5c_3^5(7-2(c_1+c_2+c_3))+ c_1^5s_3^2s_2^2(c_3^2-c_2^2)^2\neq0,
\end{align}
is satisfied, then the phase function has $A_3$ type singularity at $\eta=0$. 
Indeed, under the conditions \eqref{case 4bii d=4}, we have 
\begin{align}
\phi(\eta(y))&=  \left(\frac{c_1(c_3s_2^2+c_2s_3^2)}{c_2c_3(7-2(c_1+c_2+c_3))}\right) y_1^2+ \frac{c_2^2s_3^2y_2^2}{c_3s_2^2+c_2s_3^2}\\
&\jc{-\frac{s_1^3c_2^2c_3^2y_1y_3^2}{c_1^2(c_3s_2^2+c_2s_3^2)^2}-\frac{y_2y_3^2c_2s_2s_3^2(c_3^2-c_2^2)}{(c_3s_2^2+c_2s_3^2)^3}}+
\dots\\
&= \left(\frac{c_1(c_3s_2^2+c_2s_3^2)}{c_2c_3(7-2(c_1+c_2+c_3))}\right)\left( y_1\jc{-}\frac{s_1^3c_2^3c_3^3(7-2(c_1+c_2+c_3))y_3^2}{2c_1^3(c_3s_2^2+c_2s_3^2)^3} \right)^2\\
&+\frac{c_2^2s_3^2}{c_3s_2^2+c_2s_3^2}\left(y_2\jc{-}\frac{y_3^2s_2(c_3^2-c_2^2)}{2c_2(c_3s_2^2+c_2s_3^2)^2}\right)^2\\
&-\jc{\frac14}\frac{s_1^6c_2^5c_3^5(7-2(c_1+c_2+c_3))+ c_1^5s_3^2s_2^2(c_3^2-c_2^2)^2}{c_1^5(c_3s_2^2+c_2s_3^2)^5}y_3^4 +\dots,
\end{align}
where $``\dots"$ comprises a sum of terms with degree $>1$ with respect to the weight $(1/2, 1/2, 1/4)$. 
If we use change of variables 
\begin{align}
z_1=y_1\jc{-}\frac{s_1^3c_2^3c_3^3(7-2(c_1+c_2+c_3))y_3^2}{2c_1^3(c_3s_2^2+c_2s_3^2)^3},\quad 
z_2=y_2\jc{-}\frac{y_3^2s_2(c_3^2-c_2^2)}{2c_2(c_3s_2^2+c_2s_3^2)^2}, \quad z_3=y_3,
\end{align}
then we can write the last expression in the following equivalent form,
\begin{align}
\phi(\eta(y(z)))&=  \left(c_1-\frac{s_1^2}{7-2(c_1+c_2+c_3)}\right)z_1^2+\frac{c_2^2s_3^2z_2^2}{c_3s_2^2+c_2s_3^2}\\
&-\jc{\frac14}\frac{s_1^6c_2^5c_3^5(7-2(c_1+c_2+c_3))+ c_1^5s_3^2s_2^2(c_3^2-c_2^2)^2}{c_1^5(c_3s_2^2+c_2s_3^2)^5}z_3^4 +\dots,
\end{align}
where again $``\dots"$ denotes a sum of homogeneous polynomials of degree $>1$ with respect to the weight $(1/2,1/2,1/4)$. Actually, we have to show that, under the conditions \eqref{case 4bii d=4}, the terms of the Taylor expansion of order $\ge4$ do not contribute to the principal part 
\begin{equation}\label{phipr case 4bii}
\begin{split}
\phi_{pr}(\eta(y(z))):=  \left(c_1-\frac{s_1^2}{7-2(c_1+c_2+c_3)}\right)z_1^2+\frac{c_2^2s_3^2z_2^2}{c_3s_2^2+c_2s_3^2}-\\
-\jc{\frac14}\frac{s_1^6c_2^5c_3^5(7-2(c_1+c_2+c_3))+ c_1^5s_3^2s_2^2(c_3^2-c_2^2)^2}{c_1^5(c_3s_2^2+c_2s_3^2)^5}z_3^4.
\end{split}
\end{equation}
Let us consider the sum of monomials of degree $4$ in the original coordinate system,
\begin{align*}
c_1\eta_1^4+c_2\eta_2^4+c_3\eta_3^4&=c_1\left(y_1 +\frac{\frac{c_2c_3s_1}{c_1}y_3}{c_3s_2^2+c_2s_3^2}\right)^4+
c_2\left( \frac{c_2s_3^2y_2}{c_3s_2^2+c_2s_3^2}+\frac{c_3s_2y_3}{c_3s_2^2+c_2s_3^2}\right)^4\\
&+c_3\left(-\frac{s_3s_2c_2y_2}{c_3s_2^2+c_2s_3^2}+\frac{c_2s_3y_3}{c_3s_2^2+c_2s_3^2}\right)^4\\
&=\frac{c_2^4c_3^4}{(c_3s_2^2+c_2s_3^2)^4}
\underbrace{\sum_{j=1}^3\frac{s_j^4}{c_j^3}}_{=0}y_3^4+y_1P_1(y)+y_2P_2(y),
\end{align*}
where $P_1, P_2$ are homogeneous polynomials of degree $3$. The degree (with respect to the weight $(1/2, 1/2, 1/4)$) of any monomial of $y_jP_j(y)$, $j=1,2$, is at least $1/2+3/4=5/4$. Similarly one shows that higher order terms in the Taylor expansion also have degree at least $5/4$.

We now assume that the expression in \eqref{condition 4bii d=4} vanishes, i.e.
\begin{align} \label{condition 4bii d=4 violated}
s_1^6c_2^5c_3^5(7-2(c_1+c_2+c_3))+ c_1^5s_3^2s_2^2(c_3^2-c_2^2)^2=0.
\end{align}
If the function $\phi(\eta)$ had a singularity of type $A_k$ with $k>3$, then \eqref{condition 4bii d=4 violated} would have to hold.
However, it can be checked numerically that under the condition \eqref{case 4bii d=4}, the latter equation has no solution satisfying $|c_j|<1$ (see Appendix A).

We conclude that the phase function has $A_k$ $(k\le3)$ type singularities at $\eta=0$ whenever $c_j\neq0, j=1,2,3$.

In summary, we have the following result for $d=3$.

\begin{proposition}\label{d3case}
If $d=3$, then the phase function can have $A_k$ $(k\le 5)$ and $D_4$ type singularities. The corresponding oscillatory integral is estimated by
\begin{equation}
|J(\lambda, s)|\le C(1+|\lambda|)^{-\frac76},
\end{equation}
\end{proposition}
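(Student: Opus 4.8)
The plan is to reduce Proposition \ref{d3case} to the singularity-type analysis already carried out in Cases 1--4, together with the uniform oscillatory integral estimates attached to each singularity type. First I would observe that the decay rate of $J(\lambda,s)$ for a fixed $s=s^0$ is governed, via the method of stationary phase, by the worst (i.e.\ slowest-decaying) contribution among the finitely many critical points of $\Phi(\cdot,s^0)$ in $[0,2\pi]^3$. After a smooth partition of unity subordinate to a neighbourhood of each critical point (away from critical points the integral is $O(\lambda^{-N})$ for all $N$ by non-stationary phase, using periodicity of $\omega$), it suffices to bound each localized piece. Since $\phi=\phi_1\phi_2$ has the same singularity type as $\Phi$ at $x^0$ in the weighted-homogeneous cases (as noted after \eqref{phi}), and the cases above exhaust all possibilities---$A_1$ (non-degenerate, Cases 2, 4a partly), $A_2$ (Cases 3a-analogue, 4a, 4bi), $A_3$ (Case 3 generic, Case 4bii generic), $A_5$ (Case 3 with $1+c_3^2-7c_3=0$), and $D_4^-$ (Case 1)---the classification is complete. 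The worst case among these is the $D_4^-$ singularity in Case 1 and the $A_5$ singularity in Case 3.

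Next I would read off the singular indices from Table \ref{table:normal forms}. For $A_k$ the index is $\frac{k-1}{2k+2}$, so the decay exponent is $\sigma=\frac{d}{2}-\frac{k-1}{2k+2}=\frac32-\frac{k-1}{2k+2}$; for $k=5$ this gives $\sigma=\frac32-\frac{4}{12}=\frac32-\frac13=\frac76$. For $D_4$ the index is $\frac{k-2}{2k-2}=\frac{2}{6}=\frac13$, so again $\sigma=\frac32-\frac13=\frac76$. The non-degenerate ($A_1$) case gives $\sigma=\frac32$, $A_2$ gives $\sigma=\frac32-\frac16=\frac43$, and $A_3$ gives $\sigma=\frac32-\frac14=\frac54$, all faster than $\frac76$. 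Hence the slowest decay rate over all velocities $s$ is exactly $|\lambda|^{-7/6}$, coming from the $D_4^-$ points (Case 1) and the isolated $A_5$ points (Case 3). Combined with the trivial bound $|J(\lambda,s)|\le C$ for $|\lambda|\le 1$ (the domain has finite measure), this yields the claimed estimate $|J(\lambda,s)|\le C(1+|\lambda|)^{-7/6}$.

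The remaining point---and the main technical obstacle---is to justify that these singularity indices give \emph{uniform} bounds in the parameter $s$, not merely pointwise-in-$s$ bounds. For this I would invoke the stability of the $A_k$ ($k\le 5$) and $D_4$ singularities under perturbation: since $d=3\le 4$, these are among Thom's seven elementary catastrophes, and by the work of Duistermaat \cite{MR405513} the oscillatory integral associated to a versal deformation of such a singularity decays at the rate dictated by the singular index, uniformly over the deformation parameters. The phase $\Phi(x,s)=\omega(x)-sx$ depends on the $d=3$ parameters $s$, which is exactly the number needed for a versal deformation of these singularities; one checks transversality of the family $s\mapsto\Phi(\cdot,s)$ to the relevant strata (this is implicit in the case analysis, where the defining equations such as $1+c_3^2-7c_3=0$ cut out lower-dimensional sets in parameter space). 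The one subtlety requiring care is verifying that no singularity worse than $A_5$ or $D_4$ occurs: this is precisely the content of the claims in Case 3 (that $A_5$ is the worst, because $s_3\neq0$ when $1+c_3^2-7c_3=0$) and Case 4bii (that the system \eqref{case 4bii d=4}, \eqref{condition 4bii d=4 violated} has no solution with $|c_j|<1$, checked numerically in Appendix A, ruling out $A_k$ with $k>3$ there). Granting those, the uniform estimate and hence Proposition \ref{d3case} follow.
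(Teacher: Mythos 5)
Your proposal is correct and follows essentially the same approach as the paper: the singularity classification from Cases 1--4, the singular indices read off from Table \ref{table:normal forms}, and the appeal to Duistermaat \cite{MR405513} for uniform bounds are exactly the paper's ingredients, with your added (correct) reduction via partition of unity and non-stationary phase away from critical points. One minor imprecision worth flagging: a versal deformation of $A_5$ requires four parameters, so the three-parameter family $\Phi(\cdot,s)$ is \emph{not} a versal deformation near the $A_5$ locus (nor is $A_5$ a stable singularity for a three-parameter family); the uniform estimate nevertheless holds because Duistermaat/Karpushkin-type results give uniform decay governed by the singular index for arbitrary, not just versal, deformations of $A_k$ and $D_k$ singularities, and small perturbations can only simplify (never worsen) the singularity type.
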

where $C$ does not depend on $\lambda, s$.

\begin{remark}\label{remark d3case}
\noindent $(i)$ Note that the singularity index for $A_5$ and $D_4$ is $\frac{1}{3}$. 

\noindent $(ii)$ For $d=2,3$ the most delicate case is $c_j\neq0$ for all $j$. Incidentally, the estimate $O(|t|^{-\frac{d}3})$, conjectured in \cite{MR2150357}, can obtained by simpler arguments, even in higher dimensions. Indeed, if $d\ge3$ and $c_j\neq0$ for all $j$ then the rank of Hessian matrix is at least $d-1$. Hence, by stationary phase, we get $J(\lambda, s)=\mathcal{O}(|\lambda|^{-(d-1)/2})$. Note that $(d-1)/2\ge d/3$ for $d\ge3$. Moreover if $d\ge4$ then $(d-1)/2\ge (2d+1)/6$.
For these estimates, conditions such as $|c_j|<1$ and $c_j^2+s_j^2=1$ are not needed; it is enough to assume $c_j^2+s_j^2\neq0$ $(j=1,\dots, d)$.
\end{remark}

\subsection{Four dimensions}
Let $d=4$. Again we first consider the most degenerate case.

\textbf{Case 1:} $c_1=c_2=c_3=c_4=0$. Then $s_j=\pm1$. 
Then we have 
\begin{align*}
\phi(\eta)&=-\frac{(s_1\eta_1+s_2\eta_2+s_3\eta_3+s_4\eta_4)^2}{9}-\frac{2}{3!}((s_1\eta_1)^3+(s_2\eta_2)^3+(s_3\eta_3)^3+(s_4\eta_4)^3)\\
&+\frac{2}{5!}((s_1\eta_1)^5+(s_2\eta_2)^5+(s_3\eta_3)^5+(s_4\eta_4)^5)+\dots
 \end{align*}
\jc{where $\ldots$ denotes higher order terms in the Taylor expansion that can be considered as a small perturbation of the principal part. We will show that this small perturbation can be removed by smooth change of variables, leading to a $T_{4,4,4}$ type singularity.}

\begin{lemma}\label{Tsing}
The function $\phi$ has $T_{4,4,4}$ type singularity at $\eta=0$.
\end{lemma}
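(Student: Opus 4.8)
The plan is to show that the Taylor expansion of $\phi$ at $\eta=0$ is, after a smooth change of coordinates, exactly the normal form $x_1^4+x_2^4+x_3^4+x_4^4+(\text{lower-weight correction})$ plus a nondegenerate quadratic form in one remaining variable, and then to invoke Karpushkin's theorem on the stability of the $T_{4,4,4}$ singularity under small perturbations to absorb all higher-order terms. First I would fix a suitable quasi-homogeneous weight. Since $s_j=\pm1$, the quadratic part of $\phi$ is the rank-one form $-\tfrac19(s_1\eta_1+\cdots+s_4\eta_4)^2$, so exactly one direction is ``quadratically active''; the other three directions are governed by the cubic terms $-\tfrac13 s_j^3\eta_j^3=-\tfrac13 s_j\eta_j^3$. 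I would change coordinates by $y_1=s_1\eta_1+s_2\eta_2+s_3\eta_3+s_4\eta_4$, $y_j=s_j\eta_j$ ($j=2,3,4$), so that $\phi=-\tfrac19 y_1^2-\tfrac13\sum_{j}(\text{cubic in }y_1,\dots,y_4)+\dots$; splitting off the $y_1^2$ term by the Splitting Lemma (completing the square, since the $y_1$-dependent part has an invertible $\partial_{y_1}^2$), $\phi$ reduces to $-\tfrac19 \tilde y_1^2 + g(y_2,y_3,y_4)$ where $g$ has vanishing $1$- and $2$-jet, and the relevant weight on $(y_2,y_3,y_4)$ is $\kappa=(1/4,1/4,1/4)$ with $r=1$.

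Next I would identify the principal part of $g$. With the symmetric weight $(1/4,1/4,1/4)$, the degree-$4$ monomials are precisely $y_2^4,y_3^4,y_4^4$ and the mixed products $y_iy_jy_ky_l$; computing the quartic part of $\phi$ in the new coordinates (using $s_j^4=1$ and keeping track of the cross terms generated both by the original quartic Taylor coefficients and by the elimination of $y_1$ via completing the square) should produce a quartic form $q(y_2,y_3,y_4)=\alpha(y_2^4+y_3^4+y_4^4)+(\text{genuinely mixed terms})$ with $\alpha\neq 0$. I would then argue that $q$ is equivalent, by a linear change of coordinates, to the $T_{4,4,4}$ normal form $x_1^4+x_2^4+x_3^4+a\,x_1x_2x_3$ with $a\neq 0$ — this is the one genuine computation, and I expect it is where the argument is tightest: one must verify that the mixed quartic terms actually produce a nonzero coefficient $a$ in front of $x_1x_2x_3$ (after a linear diagonalization of the ``pure fourth power'' directions) and that the resulting form is nondegenerate in the sense required for $T_{4,4,4}$ (i.e. it has an isolated singularity, equivalently $a^3\neq -27$ so that the projective curve $x_1^4+x_2^4+x_3^4+ax_1x_2x_3=0$ is smooth). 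This step is the main obstacle; everything else is either standard reduction machinery or a citation.

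Finally, I would remove the tail $\dots$. All remaining terms in the Taylor expansion have weight strictly larger than $1$ with respect to $\kappa=(1/4,1/4,1/4)$ (monomials of total degree $\geq 5$ in $y_2,y_3,y_4$, plus all terms still involving $\tilde y_1$ beyond the quadratic, which carry extra weight), so $\phi = -\tfrac19\tilde y_1^2 + \phi_{\mathrm{pr}}(y_2,y_3,y_4) + R$ with $R$ a quasi-homogeneous perturbation of strictly higher weight. Since $\phi_{\mathrm{pr}}$ is (linearly equivalent to) the $T_{4,4,4}$ form, which is one of the parabolic/hyperbolic singularities covered by Karpushkin's result on the stability of real singularities under perturbations of higher quasi-homogeneous order \cite{MR731895}, there is a smooth local change of coordinates fixing the origin that eliminates $R$ and brings $\phi$ to the normal form $-\tfrac19 x_0^2 + x_1^4+x_2^4+x_3^4 + a\,x_1x_2x_3$. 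Dropping the nondegenerate quadratic summand $-\tfrac19 x_0^2$ (as we may, by the Splitting Lemma, when determining the singularity type), we conclude that $\phi$ has a $T_{4,4,4}$ singularity at $\eta=0$, as claimed.
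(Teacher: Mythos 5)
There is a genuine gap: your proposal overlooks the nonvanishing cubic part of $\phi$ in the active variables, which is in fact the dominant term and carries all the essential information about the singularity. After the change of variables $y_1=\sum_j s_j\eta_j$, $y_j=s_j\eta_j$ $(j=2,3,4)$ and splitting off the quadratic $-\tfrac19\tilde y_1^2$, the resulting function $g(y_2,y_3,y_4)$ does \emph{not} have vanishing $3$-jet: writing $s_1\eta_1=y_1-y_2-y_3-y_4$ and restricting to the active subspace, the cubic terms contribute
\begin{equation}
-\tfrac13\bigl(y_2^3+y_3^3+y_4^3-(y_2+y_3+y_4)^3\bigr)=-(\sigma_3-\sigma_1\sigma_2)=(y_2+y_3)(y_2+y_4)(y_3+y_4),
\end{equation}
a nonzero cubic linearly equivalent to $z_1z_2z_3$. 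This is exactly what the paper computes, and it is what produces the factor $ax_1x_2x_3$ (with $a\neq 0$) in the $T_{4,4,4}$ normal form. Your plan instead declares the weight to be $\kappa=(1/4,1/4,1/4)$ with $r=1$, hunts for the principal part among quartics, and then claims that a \emph{quartic} form $q$ is linearly equivalent to the cubic--plus--quartic normal form $x_1^4+x_2^4+x_3^4+ax_1x_2x_3$, which is impossible: no linear change of variables can turn a degree-$4$ homogeneous form into an expression containing a degree-$3$ term. Likewise, the claim that every remaining Taylor term has weight $>1$ under $(1/4,1/4,1/4)$ is false precisely because the cubic has weight $3/4$. (Your criterion ``$a^3\neq -27$'' is also the smoothness condition for the $T_{3,3,3}$ cubic $x^3+y^3+z^3+axyz$, not for $T_{4,4,4}$, where $a\neq0$ suffices.)

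By contrast, the paper's proof makes the cubic the centrepiece: it uses the identity $\sum y_j^3-\sigma_1^3=3(\sigma_3-\sigma_1\sigma_2)$ to reveal the factored cubic $\sim z_1z_2z_3$, and then invokes Karpushkin's theorem to handle the full phase as a perturbation (in the parameters $s$) of $z_1z_2z_3+O(|z|^4)$, giving the $|\lambda|^{-3/2}\log|\lambda|$ bound uniformly. It never needs to exhibit the diagonal quartic explicitly. To repair your argument you would have to start from the cubic, show it is equivalent to $z_1z_2z_3$, and only then discuss the quartic tail; the weight and principal-part bookkeeping would also need to be redone accordingly.
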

\begin{proof}
We use the change of variables $y_4=s_1\eta_1+s_2\eta_2+s_3\eta_3+s_4\eta_4, y_j=s_j\eta_j$, for $j=1,2,3$. Then we have
\begin{equation}
\phi(\eta(y))=-\frac{y_4^2}9 -\sigma_1(y)\sigma_2(y)+\sigma_3(y)+\dots,
\end{equation}
 where $\sigma_j$ are elementary symmetric polynomials,
\begin{align}
\sigma_1(y):=y_1+\dots+ y_n,\\
\sigma_2(y):=y_1y_2+y_1y_3+\dots+ y_{n-1}y_n,\\
\sigma_3(y)=y_1y_2y_3+\dots+y_{n-2}y_{n-1}y_n,
\end{align}
and $n=d-1$ is the number of ``active variables" (so $n=3$ in the present case).
Here we used the relation
\begin{equation}
y_1^3+\dots+y_n^3-\sigma_1^3=3\sigma_3-3\sigma_1\sigma_2.
\end{equation}
If $\jc{n=3}$, then it can be shown that $\sigma_3-\sigma_1\sigma_2$ is linearly equivalent (by linear change of variables)
to $z_1z_2z_3$. 
Oscillatory integrals with such phase functions have been investigated by Karpushkin \cite{MR731895}.
They satisfy estimates of the form $\mathcal{O}(|\lambda|^{-\frac32} \log|\lambda|)$ as $|\lambda|\to\infty$.
It follows from the arguments of \cite{MR2854839}
that the same estimate holds when we add a small linear perturbation, i.e.\ the phase function is $z_1z_2z_3+O(|z|^4)$. In fact, Karpushkin's theorem holds for arbitrary small perturbations, but we only need this weaker result here. 
\jc{We explain this now more precisely:
The function $\phi(\eta(y))$ can be written as 
\begin{align*}
\phi(\eta(y))=b_1( y_1+y_2+y_3, y_4)(y_4-(y_1+y_2+y_3)^2\omega(y_1+y_2+y_3))^2+y_1y_2y_3+\dots,
\end{align*} 
where $b_1$ and $\omega $ are smooth functions of two and one variables respectively, with $b_1(0, 0)\neq0$ and $\omega(0)\neq0$. Hence, it is easy to see that the function
\begin{align*}
\phi_2(\eta(y))=\frac{\phi(\eta(y))}{\phi_1(\eta(y))}
\end{align*} 
 can be written as 
\begin{align*}
\phi_2(\eta(y))&=\tilde b_1( y_1, y_2, y_3,  y_4)(y_4-(y_1+y_2+y_3)^2\omega(y_1+y_2+y_3)+ \omega_1(y_1, y_2, y_3))^2\\&+y_1y_2y_3+\dots,
\end{align*}
where $\tilde b_1, \omega_1(y)$ are smooth functions satisfying  $b_1(0, 0, 0, 0)\neq0$ and $\omega_1$ satisfies the condition that for any nonnegative integer triples  $(k_1, k_2, k_3)$, with $k_1+k_2+k_3\le3$, one  has  $\partial_1^{k_1} \partial_2^{k_2}\partial_3^{k_3}\omega_1(0, 0, 0)=0$; moreover, ``$+\dots$" means $O(|y|^4)$ with $y=(y_1, y_2, y_3, y_4)$.
Then the total phase function can be written as 
$\Phi(y, s)=\phi_2(\eta(y))-sy$ up to an irrelevant additive constant. By using the stationary phase method in the $y_4$ variable, we obtain a perturbation of a function of three variables having the form $\phi_1(y_1, y_2, y_3):=y_1y_2y_3+\dots$. Thus, the total phase function $\Phi$ of three variables and four parameters can be written as
\begin{align*}
\Phi(y, s)&=\phi_1(y_1, y_2, y_3)-(s_1+g_1(s_4))y_1-(s_2+g_2(s_4))y_2-(s_3+g_3(s_4))y_3\\
&+ s_4((y_1+y_2+y_3)^2\omega(y_1+y_2+y_3)+
s_4\omega_2(y, s_4))+ s_4\omega_3(y, s_4),
\end{align*}
where $g_l$ $(l=1, 2, 3)$ are smooth functions, with $g_k(0)=0$, and   $\omega_j(y, s_4)$ $(j=2,3)$ are smooth functions such that for any nonnegative integer triples  $(k_1, k_2, k_3)$, with $k_1+k_2+k_3\le j$, we have  the relation $\partial_1^{k_1} \partial_2^{k_2}\partial_3^{k_3}\omega_j(0, 0, 0, 0)=0$.
Now we can apply the classical theorem of Karpushkin  \cite{MR731895} to the function $\Phi$ and obtain the required bound.}
\end{proof}

\textbf{Case 2:} Suppose that only one of the $c_j$ is zero; without loss of generality, $c_1=0 $ and $c_2\neq0, c_3\neq0, c_4\neq0$. Then by \eqref{dethes} we have 
$\det \Hphi(0)\neq0$. Thus, the phase function has an $A_1$ (non-degenerate) singularity. 

\textbf{Case 3:} Suppose that two of the $c_j$ are zero; without loss of generality, $c_1=0=c_2$ and $c_3\neq0, c_4\neq0$. \jc{Since $|s_1|=|s_2|=1$, the matrix $(S_{kj})_{kj=1}^2$ (see Lemma~\ref{rank} for the notation) has rank $1$. Hence the rank of the matrix $\Hphi(0)$ equals $3$.} Therefore, the corresponding integral decays as $\mathcal{O}(|\lambda|^{-3/2})$. This is sufficient for our result. 

\textbf{Case 4:} Suppose that three of the $c_j$ are zero; without loss of generality, $c_1=c_2=c_3=0$ and $c_4\neq0$. Then we have
\jc{
\begin{equation}
\phi(\eta)=c_4\eta_4^2-\frac{(\sum_{j=1}^4s_j\eta_j)^2}{9-2c_4}-\frac{2}{3!}((s_1\eta_1)^3+(s_2\eta_2)^3+
(s_3\eta_3)^3+s_4\eta_4^3)+\dots.
\end{equation}
By a linear change of variables $y_1=s_1\eta_1$,  $y_2=s_2\eta_2$, $y_3=\sum_{j=1}^4s_j\eta_j$, $y_4=\eta_4$,
%\begin{align*}
%\phi(\eta)&=c_4y_4^2-\frac{y_3^3}{\jc{9-2c_4}}\jc{-}\frac{2}{3!}(y_1^3+y_2^3+(y_3-(y_1+y_2+s_3y_4))^3+s_4y_4^3)\\
%&=c_4y_4^2-\frac{y_3^3}{\jc{9-2c_4}}\jc{-}\frac{2}{3!}(y_1^3+y_2^3-(y_1+y_2)^3)^3 +\dots\\
%&=c_4y_4^2-\frac{y_3^2}{\jc{9-2c_4}}-(y_1+y_2)y_1y_2+\dots,
%\end{align*}
\begin{align*}
\phi(\eta)&=c_4y_4^2- \frac{y_3^2}{\jc{9-2c_4}}\jc{-}\frac{2}{3!}(y_1^3+y_2^3-(y_1+y_2)^3) +\dots\\
&=c_4y_4^2- \frac{y_3^2}{\jc{9-2c_4}}\jc{-}\frac{2}{3!}(y_1^3+y_2^3-(y_1+y_2)^3)\\
&=c_4y_4^2- \frac{y_3^2}{\jc{9-2c_4}}+y_1y_2(y_1+y_2).
\end{align*}
where ``$\dots$" consists of sum of terms with degree $>1$ with respect to the weight $(1/3, 1/3, 1/2, 1/2)$, and
the principal part is given by the expression without ``$\dots$". In this case the phase function has a $D_4^{-}$ type singularity. The corresponding oscillatory integral decays as $\mathcal{O}(|\lambda|^{-5/3})$ (see Table~\ref{table:normal forms}).}

\textbf{Case 5:} Finally, we consider the case $c_1,c_2,c_3,c_4\neq0$. Then by Lemma \ref{rank} the rank of the quadratic part is at least $3$, and hence the corresponding phase function has $A_k$ type singularities. In this case we do not need to investigate the multiplicity of the corresponding phase function, because the corresponding oscillatory integral decays at least as $\mathcal{O}(|\lambda|^{-3/2})$, by stationary phase.
 
We record our findings for the $d=4$ case. 

\begin{proposition}\label{d4case}
If $d=4$, then the phase function has $\jc{T_{4, 4, 4}}, \, D_4^-$ or $ A_k$ type singularities.
The corresponding oscillatory integral is estimated by
\begin{equation}
|J(\lambda, s)|\le C(1+|\lambda|)^{-\frac32}\log(2+|\lambda|),
\end{equation}
where $C$ does not depend on $\lambda, s$.
\end{proposition}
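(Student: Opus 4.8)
The plan is to combine the local singularity analysis of Cases~1--5 above with a partition of unity, the point being that the slowest local decay rate occurring in dimension four is the $|\lambda|^{-3/2}\log|\lambda|$ produced by the $T_{4,4,4}$ point. First I would reduce the global estimate on $J(\lambda,s)$ to uniform estimates on local contributions. Since $\nabla\omega(\T^4)$ is compact, for $s$ outside a fixed compact neighbourhood $K$ of this set the phase $\Phi(\cdot,s)$ has no critical point, so repeated integration by parts (using analyticity of $\omega$) gives decay faster than any power of $|\lambda|$, uniformly. For $s\in K$ I would argue by compactness on $\T^4\times K$: near a point $(x^0,s^0)$ with $\nabla\omega(x^0)\neq s^0$, non-stationary phase gives rapid decay on a small chart, uniformly for $s$ close to $s^0$; near a point with $\nabla\omega(x^0)=s^0$, the decay of the local contribution is governed by the oscillation index of the phase at $x^0$, which by the reduction leading to \eqref{phas} coincides with that of $\phi$, and it then suffices to determine the singularity type of $\phi$ at $x^0$ and the associated rate, uniformly for $s$ near $s^0$.

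Next I would run through the cases according to how many of the $c_j=\cos x_j^0$ vanish, using Lemma~\ref{rank} throughout. If exactly one vanishes, then $\det\Hphi(0)\neq0$ by \eqref{detheslim}, so $x^0$ is non-degenerate and stationary phase gives $\mathcal{O}(|\lambda|^{-2})$. If exactly two vanish, the rank-one structure of the perturbation $S$ pins down $\rank\Hphi(0)=3$; if none vanishes, $\rank\Hphi(0)\ge3$ by Lemma~\ref{rank}; in either subcase the splitting lemma plus stationary phase in the three non-degenerate directions reduces matters to a one-dimensional oscillatory integral over a smooth cutoff, which is $\mathcal{O}(1)$, whence $\mathcal{O}(|\lambda|^{-3/2})$ --- no identification of the $A_k$-type is needed, and, being corank-one points, they produce no logarithm. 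If exactly three vanish, the weighted-homogeneous reduction carried out in Case~4 exhibits a $D_4^-$ singularity; since $D_4^-$ is a stable elementary catastrophe, the Duistermaat estimate from Table~\ref{table:normal forms} (singular index $1/3$) gives $\mathcal{O}(|\lambda|^{-5/3})$, uniformly for $s$ near $s^0$. Finally, if all four $c_j$ vanish, then $s^0=\nabla\omega(x^0)=0$ and, by Lemma~\ref{Tsing}, $\phi$ has a $T_{4,4,4}$ singularity; Karpushkin's theorem \cite{MR731895}, applied exactly as in the proof of Lemma~\ref{Tsing} to the four-parameter family $\Phi$ after splitting off the non-degenerate $y_4$-direction, yields $\mathcal{O}(|\lambda|^{-3/2}\log|\lambda|)$, and crucially this bound persists under the perturbations produced by moving $s$.

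To finish, I would assemble these: the slowest of the five local rates is $\mathcal{O}(|\lambda|^{-3/2}\log|\lambda|)$, attained only near $s=0$, while the remaining ones are $\mathcal{O}(|\lambda|^{-3/2})$ or faster and carry no logarithm. Summing the finitely many local contributions and the rapidly decaying remainder, with constants made uniform by the compactness argument, gives $|J(\lambda,s)|\le C(1+|\lambda|)^{-3/2}\log(2+|\lambda|)$ with $C$ independent of $\lambda,s$ (the logarithm being harmless for $|\lambda|\le1$), which is Proposition~\ref{d4case}.

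The hard part is the uniformity in $s$ at the most degenerate point. The $T_{4,4,4}$ singularity is unstable, so I cannot quote a single normal form valid on a fixed $s$-neighbourhood of $0$; instead I must verify that the higher-order Taylor remainder of $\phi$, which becomes a genuine perturbation once $s$ is displaced, is absorbed by Karpushkin's theorem with perturbations (equivalently, by the argument of \cite{MR2854839}) --- which is precisely what is arranged in the proof of Lemma~\ref{Tsing}. A secondary, more routine, difficulty is the bookkeeping that makes the partition of unity and the count of critical points uniform in $s$; this uses only analyticity of $\omega$ and compactness of $\nabla\omega(\T^4)$.
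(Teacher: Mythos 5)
Your proposal follows the same case split as the paper (by how many $c_j$ vanish), identifies the same singularity types ($A_1$, corank-one $A_k$, $D_4^-$, $T_{4,4,4}$), and invokes Karpushkin's theorem in exactly the way Lemma~\ref{Tsing} does; it is essentially the paper's proof, with the uniformity-in-$s$ and partition-of-unity bookkeeping spelled out more explicitly than the paper bothers to. One small slip: when all four $c_j$ vanish, $x_j^0\in\{\pi/2,3\pi/2\}$ gives $s_j=\pm1$ and $\omega(x^0)=3$, so $s^0=\nabla\omega(x^0)=(\pm\tfrac13,\dots,\pm\tfrac13)\neq0$, not $s^0=0$; this misidentification of the critical velocity is harmless since your exclusion of logarithms elsewhere rests on the corank-one/$D_4^-$ classification rather than on the location in $s$-space.
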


\section{Proofs of the PDE applications}\label{Section proofs pde applications}

\begin{proof}[Proof of Theorem \ref{theorem Global well-posedness}]
We are going to use a contraction mapping argument (Banach's fix point theorem) in the metric space
\begin{align*}
X=\{u\in L_t^{\infty}\ell_x^{2}:\|u\|_{L_t^{q_0}\ell_x^{\infty}\cap L_t^{\infty}\ell_x^{2}}\leq 2C_{1,2}\|(f,g)\|_{\ell^2\times \ell^2}\},
\end{align*}
where $C_{1,2}$ denotes the constant in the Strichartz estimate \eqref{Strichartz subsumed} with $(\overline{q},\overline{r})=(1,2)$ and $q_0$ as in Remark \ref{remark Strichartz pairs}. For convenience, we also introduced the notation $(f,g):=(u(0),u_t(0))$ and
\begin{align*}
\|(f,g)\|_{\ell^2\times \ell^2}:=\|f\|_{\ell^2}+\|g\|_{\ell^2}.
\end{align*}
Consider the solution map
\begin{align*}
\Lambda u(t):=U_0(t)f+U_1(t)g\pm\int_0^tU_1(t-\tau)|u(\tau)|^{2s}u(\tau)\rd \tau,
\end{align*}
where 
\begin{align*}
U_0(t):=\cos(t\sqrt{\mathbf{1}-\Delta_x}),\quad U_1(t):=\frac{\sin(t\sqrt{\mathbf{1}-\Delta_x})}{\sqrt{\mathbf{1}-\Delta_x}}.
\end{align*}
A solution to the discrete nonlinear Klein--Gordon equation \eqref{NLDKG} is a fix point of $\Lambda$. 
To apply the contraction mapping argument, we first check that $\Lambda X\subset X$. Indeed, applying the Strichartz estimate \eqref{Strichartz subsumed} with $(\overline{q},\overline{r})=(1,2)$, we have, for $u\in X$,
\begin{align*}
\|\Lambda u\|_{X}
&\leq C_{1,2}(\|(f,g)\|_{\ell^2\times \ell^2}+\|u\|_{L_t^{2s+1}\ell_x^{2(2s+1)}}^{2s+1})\\
&\leq C_{1,2}(\|(f,g)\|_{\ell^2\times \ell^2}+\|u\|_{X}^{2s+1})\\
&\leq C_{1,2}(\|(f,g)\|_{\ell^2\times \ell^2}+(2C_{1,2}\epsilon)^{2s+1}).
\end{align*}
In the second inequality we used that the pair $(2s+1,2(2s+1))$ is a Strichartz pair and is thus controlled by the $X$-norm; in the last inequality we used the assumption on the initial data. For $\epsilon$ sufficiently small, the last expression is bounded by $2C_{1,2}\|(f,g)\|_{\ell^2\times \ell^2}$, and hence $\Lambda u\in X$. Similarly, using
\begin{align*}
\||u|^{2s}u-|v|^{2s}v\|_{L_t^1\ell_x^2}\leq C_s\|u-v\|_{L_t^{2s+1}\ell_x^{2(2s+1)}}(\|u\|^{2s}_{L_t^{2s+1}\ell_x^{2(2s+1)}}+\|v\|^{2s}_{L_t^{2s+1}\ell_x^{2(2s+1)}}),
\end{align*}
one verifies that $\Lambda:X\to X$ is a contraction.
\end{proof}

\begin{proof}[Proof of Theorem \ref{theorem Decay of small solutions}]
Here we are use the contraction mapping argument in the metric space
\begin{align*}
X=\{u \in L_t^{\infty}\ell_x^{2}:\|u(t)\|_{\ell_x^{p}}\leq 2C_p\langle t\rangle^{-\sigma_(p-2)/p}\|(u(0),u_t(0))\|_{\ell^{p'}\times \ell^{p'}}\},
\end{align*}
where $\langle t\rangle:=(1+|t|)$, $2\leq p\leq p_d$, $\sigma=\sigma_d$, and $C_{p}$ denotes the constant in the $\ell^p$ decay estimate for the linear equation \eqref{Lp decay}. By the latter, we have, for $u\in X$,
\begin{align*}
\|\Lambda u\|_{X}
&\leq C_{p}\langle t\rangle^{-\sigma(p-2)/p}\|(u(0),u_t(0))\|_{\ell^{p'}\times \ell^{p'}}\\
&+C_p\int_0^t\langle t-\tau\rangle^{-\sigma(p-2)/p}\|u(\tau)\|_{\ell_x^{(2s+1)p'}}^{2s+1}\rd \tau.
\end{align*}
It suffices to show that the second term is bounded by the first.
The assumptions on $p,s$ and \eqref{conditions on p_d and s_d} imply that
\begin{align}\label{condition for integral to converge}
(2s+1)p'\geq p,\quad \sigma(p-2)(2s+1)/p>1,
\end{align}
from which it follows that the second term is bounded by (using that $u\in X$)
\begin{align*}
C_p(2C_p\|(u(0),u_t(0))\|_{\ell^{p'}\times \ell^{p'}})^{2s+1}\int_0^{\infty} \langle t-\tau\rangle^{-\sigma(p-2)/p}\langle\tau\rangle^{-\sigma(p-2)(2s+1)/p}\rd \tau.
\end{align*}
By the second inequality in \eqref{condition for integral to converge}, the integral is convergent and bounded by a constant times $\langle t\rangle^{-\sigma(p-2)/p}$. Since $\|(u(0),u_t(0))\|_{\ell^{p'}\times \ell^{p'}}^{2s}\leq \epsilon^{2s}$, we may choose $\epsilon$ so small that $\Lambda u\in X$. The contractvity of $\Lambda$ again follows in a similar manner. Banach's fix point theorem then yields the existence of a unique solution, together with the a priori bound \eqref{Lp decay nonlinear}.
\end{proof}

\begin{proof}[Proof of Theorem \ref{prop. Resolvent estimates}]
We first remark that, by standard arguments, the Strichartz estimates \eqref{Strichartz mapping properties} can be localized in time to an interval $[0,T]$. We then apply the localized estimates with $(q,r)=(\overline{q},\overline{r})=(2,\frac{2\sigma}{\sigma-1})$ to the function $u(t):=\e^{\I t \lambda}\psi$, which satisfies the equation
\begin{align*}
\I\partial_t u+H_0 u =(H_0-\lambda)u,\quad u(0)=\psi,
\end{align*}
where $H_0:=\sqrt{\mathbf{1}-\Delta}$.
Here we assume without loss of generality that $\lambda$ is in the lower half plane; otherwise we consider $\e^{-\I t \lambda}\psi$.
The result is that
\begin{align*}
c(\lambda,T)\|\psi\|_{\ell^{\frac{2\sigma}{\sigma-1}}}\leq C(\|\psi\|_{\ell^2}+ c(\lambda,T)\|(H_0-z)\psi\|_{\ell^{\frac{2\sigma}{\sigma+1}}}),
\end{align*}
where $c(z,T):=\|\e^{\I t z}\|_{L_t^2(0,T)}\geq T^{1/2}$ by the assumption that $z$ is in the lower half plane. Dividing by $c(z,T)$ and letting $T\to\infty$ yields the claimed bound.
\end{proof}

\begin{proof}[Proof of Corollary \ref{Corollary spectral consequences}]
Since $(H_0-\lambda)\psi=-V\psi$, we have, by Hölder's inequality,

\begin{align*}
\|\psi\|_{\ell^{\frac{2\sigma}{\sigma-1}}}\leq C\|V\psi\|_{\ell^{\frac{2\sigma}{\sigma+1}}}
\leq C \|V\|_{\ell^{\sigma}} \|\psi\|_{\ell^{\frac{2\sigma}{\sigma-1}}}.
\end{align*}
If $\epsilon$ is so small that $C\epsilon<1$ we get a contradiction, unless $\psi=0$.
\end{proof}

\appendix

\section{Numerical results}

Here we list the numerical solutions of the system \eqref{case 4bii d=4}, \eqref{condition 4bii d=4 violated}. These were input in Wolfram alpha \cite{Wolfram} (access Oct 25, 2020) in the form
\begin{align*}
&7 - (x + y + z) - 1/x - 1/y - 1/z = 0,\\
&(1 - x^2)^2/x^3 + (1 - y^2)^2/y^3 + (1 - z^2)^2/z^3 = 0,\\
& (1 - x^2)^3 y^5 z^5 (7 - 2 (x + y + z)) + x^5 (1 - y^2) (1 - z^2) (z^2 - y^2)^2 = 0.
\end{align*}
Real solutions:
\begin{align*}
&x\approx-0.143939, y\approx 0.144912, z\approx 6.90076,\\
&x\approx -0.143939, y\approx 6.90076, z\approx 0.144912,\\
&x\approx 12.6977, y\approx-2.486, z\approx-0.402253,\\
&x\approx 12.6977, y\approx-0.402253, z\approx-2.486.
\end{align*}

\subsection*{Acknowledgements} The authors wish to thank the two anonymous referees for helpful remarks. We are garateful to Orif O.\ Ibrogimov for useful discussions.

\subsection*{Conflict of interest}

On behalf of all authors, the corresponding author states that there is no conflict of interest.

\bibliographystyle{plain}

\end{document}